\documentclass[
preprint, 3p, 
number, 
sort&compress,
]{elsarticle}
\pdfoutput=1

\usepackage[utf8]{luainputenc}
\usepackage[english]{babel}
\usepackage{csquotes}

\usepackage[plainpages=false,pdfpagelabels,hidelinks,unicode]{hyperref}

\usepackage{amsmath}
\allowdisplaybreaks
\usepackage{amssymb}
\usepackage{commath}
\usepackage{mathtools}
\usepackage{bbm}
\usepackage{bm}
\usepackage{siunitx}
\usepackage{adjustbox}

\usepackage{amsthm}
\theoremstyle{plain}
\newtheorem{theorem}{Theorem}[section]

\newtheorem{lemma}[theorem]{Lemma}
\newtheorem{proposition}[theorem]{Proposition}
\newtheorem{remark}[theorem]{Remark}
\newtheorem{definition}[theorem]{Definition}

\newtheorem{assumption}[theorem]{Assumption}

\usepackage{color}
\usepackage{graphicx,graphics}
\usepackage[small]{caption}
\usepackage{subcaption}

\ifx\useTikzForPlotting\undefined
\else
  \usepackage{pgfplots}
  \pgfplotsset{compat=1.11}
  \usetikzlibrary{external}
  \tikzexternalize[prefix=figures/]
\fi



\usepackage{booktabs}
\usepackage{rotating}
\usepackage{multirow}

\usepackage{multicol}
\usepackage{enumitem}

\usepackage{calc}
\usepackage{xparse}
\usepackage{pifont}

\newcommand{\R}{\mathbb R}

\newcommand{\PP}{\mathbb P}
\newcommand{\TT}{\mathcal{T}}

\newcommand{\EE}{\mathcal{E}}
\newcommand{\ee}{\mathrm{e}}

\newcommand{\diff}[1]{{\mathrm{d}{#1}}}

\newcommand{\IdxM} {\mat{\mathbb{I}}}




\newcommand{\diag}{\mathop{\mathrm{diag}}}

\newcommand{\bbf}{{\mathbf {f}}}

\newcommand{\bn}{{\mathbf {n}}}

\newcommand{\bu}{\mathbf{u}}

\newcommand{\bx}{\mathbf{x}}

\newcommand{\ww}[1]{\underline{#1}}
\renewcommand{\div}{\operatorname{div}}
\newcommand{\est}[1]{\left\langle#1\right\rangle}
\newcommand{\bU}{\mathbf{U}}

\newcommand{\dd}{\mathrm{d}}
\newcommand{\bV}{\mathbf{V}}
\newcommand{\mean}[1]{\overline{#1}}

\newcommand{\Ol}{\mathcal{O}}

\newcommand{\VV}{\mathcal{V}}

\newcommand{\CU}{\mathcal{U}}




\renewcommand{\vec}[1]{\ww{#1}}
\newcommand{\mat}[1]{\underline{\underline{#1}}}


\usepackage{bbm}

\def\R{\mathbb{R}}

\definecolor{darkspringgreen}{rgb}{0., 0.55, 0.3}
\definecolor{dartmouthgreen}{rgb}{0.05, 0.5, 0.06}
\definecolor{etonblue}{rgb}{0.59, 0.78, 0.64}
\definecolor{airforceblue}{rgb}{0., 0.4, 0.66}
\definecolor{arylideyellow}{rgb}{0.91, 0.84, 0.42}
\definecolor{emerald}{rgb}{0.31, 0.78, 0.47}
\definecolor{uclagold}{rgb}{1.0, 0.7, 0.0}
\definecolor{cadmiumorange}{rgb}{0.93, 0.53, 0.18}

\newtheorem{con}{Conclusion}


 \newcommand{\vecfnum}{\vec{f}^\mathrm{num}}
 \newcommand{\fnum}{f^\mathrm{num}} 
 
  \newcommand{\fnumx}{f^{\mathrm{num},x}}

  \newcommand{\bfnum}{\mathbf{f}^\mathrm{num}}
 
 \newcommand{\bvu}{\vec{\mathbf{u}}}
 \newcommand{\bvf}{\vec{\mathbf{f}}}
 \newcommand{\bvecfnum}{\mathbf{\vec{f}}^\mathrm{num}}
 
 \renewcommand{\bm}{\mathbf{m}}

\newsavebox{\DelimiterBox}
\newlength{\DelimiterHeight}
\newlength{\DelimiterDepth}
\newsavebox{\ArgumentBox}
\newlength{\ArgumentHeight}
\newlength{\ArgumentDepth}
\newlength{\ResizedDelimiterHeight}
\newlength{\ResizedDelimiterDepth}

\newcommand{\encloseby}[3]{%
  \savebox{\ArgumentBox}{$\displaystyle #1$}%
  \settoheight{\ArgumentHeight}{\usebox{\ArgumentBox}}%
  \settodepth{\ArgumentDepth}{\usebox{\ArgumentBox}}%
  \savebox{\DelimiterBox}{#2}%
  \settoheight{\DelimiterHeight}{\usebox{\DelimiterBox}}%
  \settodepth{\DelimiterDepth}{\usebox{\DelimiterBox}}%
  \setlength{\ResizedDelimiterHeight}{%
    \maxof{1.2\ArgumentHeight}{\DelimiterHeight}%
  }
  \setlength{\ResizedDelimiterDepth}{%
    \maxof{1.2\ArgumentDepth}{\DelimiterDepth}%
  }
  \raisebox{-\ResizedDelimiterDepth}{%
    \resizebox{\width}{\ResizedDelimiterHeight+\ResizedDelimiterDepth}{%
      \raisebox{\DelimiterDepth}{#2}%
    }%
  }
  #1
  \raisebox{-\ResizedDelimiterDepth}{%
    \resizebox{\width}{\ResizedDelimiterHeight+\ResizedDelimiterDepth}{%
      \raisebox{\DelimiterDepth}{#3}%
    }%
  }
}

\ifluatex
  \newcommand{\armean}[1]{\encloseby{#1}{$\{\mkern-5mu\{$}{$\}\mkern-5mu\}$}}
  \newcommand{\jump}[1]{\encloseby{#1}{$[\mkern-4mu[$}{$]\mkern-4mu]$}}
\else
  \newcommand{\armean}[1]{\encloseby{#1}{$\{\mkern-6mu\{$}{$\}\mkern-6mu\}$}}
  \newcommand{\jump}[1]{\encloseby{#1}{$[\mkern-3mu[$}{$]\mkern-3mu]$}}
\fi

\newcommand{\logmean}[1]{\armean{#1}_\mathrm{log}}


\newcommand{\wwto}{\stackrel{(*)-\textbf{weakly}}{\longrightarrow}}



\usepackage{lipsum}
\makeatletter
\def\ps@pprintTitle{%
 \let\@oddhead\@empty
 \let\@evenhead\@empty
 \def\@oddfoot{}%
 \let\@evenfoot\@oddfoot}
\makeatother

\usepackage{algorithm}
\usepackage[noend]{algpseudocode}
\usepackage{xcolor}
\usepackage{bbm}



\begin{document}

\begin{frontmatter}

\title{Convergence of Discontinuous Galerkin Schemes for the Euler Equations  via Dissipative Weak Solutions}

\author[label1]{M\'aria  Luk{\'a}{\v{c}}ov{\'a}-Medvid’ov{\'a} 
}
\ead{lukacova@mathematik.uni-mainz.de}

\author[label1]{Philipp \"Offner \corref{cor1}}
\ead{mail@philippoeffner.de}

\cortext[cor1]{Corresponding author: Philipp \"Offner}
\address[label1]{Institut f\"ur Mathematik, Johannes Gutenberg Universität, 55099 Mainz, Germany.}
\begin{abstract}
  In this paper, we present convergence analysis of  high-order finite element based methods, in particular, we focus on a discontinuous Galerkin scheme using summation-by-parts operators. 
To this end, it is crucial that structure preserving properties, such as
positivity preservation and entropy inequality hold. We
demonstrate how to ensure them and prove the convergence of our multidimensional high-order DG scheme via dissipative weak solutions.
In numerical simulations, we verify our theoretical results.

\end{abstract}

\begin{keyword}
  Euler equations
  \sep
  dissipative weak solutions
  \sep 
  convergence analysis 
  \sep 
  discontinuous Galerkin 
  \sep 
  structure preserving
\end{keyword}

\end{frontmatter}

\section{Introduction}
Since  the Euler system  of gas dynamics is in general ill-posed in the class of admissible (entropy) weak solutions an alternative concept of solutions for hyperbolic conservation laws,  \emph{measure-valued solutions} (MVS), has been developed and used in the analysis of numerical schemes. The advantage of MVSs, being advocated already by DiPerna \cite{diperna1985compensated}, is that they can be identified as a limit of oscillatory approximated sequences. 
In a series of papers, Feireisl and co-authors have introduced the concept of  dissipative measured-valued and dissipative weak solutions. They showed their existence and weak-strong uniqueness properties for  the Euler (barotropic, complete) and the Navier-Stokes systems \cite{carrillo2017weak, feireisl2019uniqueness, ghoshal2021uniqueness}.\\
 In  \cite{feireisl2019convergence} the authors have firstly 
studied  in this framework the convergence of a numerical scheme. They focused on a class of entropy stable finite volume schemes and consider the  barotropic and complete compressible Euler equations in the multidimensional case. They needed to establish suitable stability and consistency estimates and demonstrated that the Young measure generated by numerical solutions represent  a dissipative measure-valued solution of the Euler system. 
Moreover, the numerical solutions converge strongly  to the strong solution on its lifespan.  To this end, the weak-strong uniqueness principle was applied \cite{gwiazda2015weak}. 
Later, the authors constructed further suitable finite volume (FV) schemes in \cite{feireisl2019finite} and tested  them also numerically \cite{feireisl2021computing}. In \cite{ben2006direct}, the convergence of a more standard FV scheme based on the solutions of generalized Riemann problems was presented.
However, up to this point, the numerical investigation of the Euler equation in terms of dissipative (measure-valued or weak) solutions have been limited to first-order numerical methods. Our aim is to extend  previous results  to entropy stable high-order discontinuous Galerkin (flux differencing)  methods using summation-by-parts operators as described \textit{inter alia} in \cite{chen2017entropy, offner2020stability, ranocha2018generalised_2}. Extensions to the residual distribution framework \cite{abgrall2012review, abgrall2019analysis} and other FE based approaches \cite{guermond2018second, kuzmin2020monolithic} are in preparation. One main advantage of the flux differencing approach is that we can write and interpret it in a FV manner. 
Thus, we are able to transform the results of  \cite{feireisl2021numerics} to our high-order methods.  As far as we know, our convergence results  for higher order schemes are the first available in the literature. This also demonstrates well that the solution concept of dissipative weak solutions is consistent with the framework of high-order numerical methods. It is our belief that it is the most promising ansatz in the investigation of the Euler equations in the current state of the art.\\
The paper is organized as follows:
In the second section, we introduce the concept of dissipative weak solutions\footnote{For a more detailed introduction as well as dissipative measure valued solutions, we refer to \cite{feireisl2021numerics}.} for the Euler equations. We focus only on the complete Euler system in  two space dimensions, our results can be generalized to the  barotropic model and three dimensional case, too.  We will introduce a high-order DG method and show how the entropy conservation/dissipation property is ensured in Section  \ref{se:flux_differencing}. We will further demonstrate some weak BV estimate which follows from the entropy inequality. Afterwards, we investigate the consistency property of the DG method and in Section \ref{se:convergence}, we prove 
the convergence of the high-order DG method to a
dissipative weak solution. Using  the weak-strong uniqueness principle, we can ensure that 
the numerical solutions converge strongly to the strong solution on the lifespan of  latter. In numerical simulation in Section \ref{eq:numerics}, we verify our theoretical results. Conclusion, in Section \ref{se_con}, finishes this manuscript. In the Appendix, Section \ref{sec_appendix}, we introduce some notations and additional definitions for completeness. We give further technical details about the flux-differencing method and the fully-discrete setting by applying limiters.  
 
\section{Dissipative Weak Solutions for the Complete Euler System} \label{se:Foundation}
 
In this work, we focus  on  two-dimensional \textbf{complete Euler equations} describing gas dynamics and introduce for them 
dissipative weak solutions (DWS).
The Euler equations are one of the most investigated systems in computational fluid dynamics.
Derived from the conservation laws (mass conservation, momentum conservation (Newton's second law) and energy conservation (first Law of Thermodynamics)),  the Euler equations are  formulated in the conservative variables density $\rho$, momentum $\bm=\rho \bu$ and total energy 
$E=\frac{1}{2} \rho |\bu|^2 +\rho e$, where $e$ is the internal energy and $\bu:=(u_1, u_2)^T$ the velocity field. 
The equation of state for an ideal gas 
$p=(\gamma-1)\rho e$ with $\gamma>1 $ and pressure $p$ is used. 
The Euler equations are given as follows 
\begin{equation}\label{eq_Euler_conservation}
 \begin{aligned}
   \partial_t \rho +\div_x \bm=&0,\\
   \partial_t \bm +\div_x \left( \frac{\bm\otimes \bm}{\rho} \right) +(\gamma-1) \nabla_x \left(
   E-\frac{1}{2} \frac{|\bm|^2}{\rho}\right)&=0,   \Longleftrightarrow \qquad \bU+\div \mathbf{f}=0\\
   \partial_t E +\div_x \left [\left( E +(\gamma-1)\left(
   E-\frac{1}{2} \frac{|\bm|^2}{\rho}\right) \right) \frac{\bm}{\rho} \right]&=0
 \end{aligned}
\end{equation}
with $(t, \bx) \in (0,T)\otimes \Omega$. Here,  $\bU=(\rho, \bm, E)^T$ are the conserved variables and 
$\mathbf{f}_m=(\rho u_m, u_m \bm +p \mathbf{e}_m, u_m(E+p))^T, m=1,2$ are the flux functions where $\mathbf{e}_m$ represents the m-th row of the unit matrix. \\
In the whole text, we consider the bounded domain $\Omega \in \R^2$ together with periodic or no-flux boundary conditions.\\
For the Euler equations \eqref{eq_Euler_conservation} the mathematical entropy  can be given by 
\begin{equation}\label{eq:entropy_convex}
\eta=- \frac{\rho  s}{\gamma-1}
\end{equation} 
with thermodynamic entropy   $s:=\log \frac{p}{\rho^{\gamma}}$.
The corresponding entropy flux $\mathbf{g}:=(g_1, g_2)$ is defined by $g_m=\eta \cdot u_m$, $m=1,2$, with the velocity vector $\bu$. We obtain the entropy variable 
\begin{equation}\label{eq:entropy_variables}
\mathbf{w}=\eta'(\bU)= \left(\frac{\gamma}{\gamma-1}-\frac{s}{\gamma-1}-\frac{\rho |\mathbf{\bu}|^2}{2p}, \frac{\rho u_1}{p},\frac{\rho u_2}{p},
-\frac{\rho}{p} \right)^T
\end{equation}
and entropy potential $\Psi=\rho \bu$.
Additionally to \eqref{eq_Euler_conservation}, we require the following entropy inequality
 \begin{equation}\label{iq:entropy}
 \frac{\partial}{\partial t} \eta +\div_x \bf{g} \leq 0.
 \end{equation}

In this work, we focus on the convergence properties of higher order DG methods to 
\textbf{dissipative weak (DW) solution for the Euler equations}. Through the investigation of DeLellis and Sz{\'e}kelyhidi
that weak entropy solutions of the Euler equations are not unique\footnote{Already in \cite{sever1989uniqueness, sever1990correction}  non-uniqueness of weak entropy solutions has been presented for a constructed hyperbolic system of conservation laws.  }  \cite{de2010admissibility},
a lot of further attention  has been  given to the concept of measure-valued  solutions
\cite{brenier2011weak, gwiazda2015weak} in the context of Euler equations. 
DW solutions can be seen as a natural closure of a set of consistent approximations with respect to a weak topology. 
They fulfill the Euler equations up to the defect measures associated to possible concentrations and oscillations. 
Due to this fact, the class is large enough to contain all possible limits of consistent and stable numerical schemes. 
Therefore, DW solutions can be interpreted  as space-time expected values or barycenters of the associated dissipative measure-valued solutions.
For  the barotropic Euler equations, the definition of DW solution does not contain any Young measure and 
is therefore elegant.  However, in this work, we are focusing on the complete Euler system and we have the explicit appearance of the Young measure in the convective term in the entropy balance. 
We consider the Euler equations  with periodic boundary conditions. Alternatively, impermeable boundary conditions can also be considered, cf. \cite{feireisl2021numerics}. 
For the definition, we need also the following notations from \cite{feireisl2021numerics}.
We denote by $\mathcal{M}^+(\overline{\Omega})$  the set of all nonnegative Borel measures on a topological space $\overline{\Omega}$. 
With $\mathcal{M}(\overline{\Omega})$ the set of all signed \textbf{Radon measures} is described. They  can be identified at the space of all linear forms on $C_c(\overline{\Omega})$, especially if $\overline{\Omega}$ is compact,
i.e. $[C_c(\overline{\Omega})]^*=\mathcal{M}(\overline{\Omega})$. Finally,  the set of  \emph{positive semi-definite matrix valued measures} $\mathcal{M}^+(\overline{\Omega}; \R^{d\times d}_{sym})$ is defined as follows:   
\begin{equation*}
\mathcal{M}^+(\overline{\Omega}, \R^{d\times d}_{sym}) = \left\{ \nu \in \mathcal{M}^+(\overline{\Omega}, \R^{d\times d}_{sym}) \big|
\int_{\overline{\Omega}} \phi(\xi \otimes\xi): \diff \nu\geq0  \text{ for any } \xi \in \R^d, \phi \in C_c(\overline{\Omega}), \phi\geq 0 
 \right\}.
\end{equation*}

\begin{definition}[Dissipative Weak Solution for the Euler Equations]\label{def_dmv}
Let $\Omega\subset \R^2$ be a bounded domain. 
We call $ [\rho, \bm, \eta]$ a \textbf{dissipative weak (DW) solution}  of the complete Euler system
with periodic conditions, and the initial condition $[\rho_0, \bm_0, \eta_0]$ with $\rho>0$ and $\int_{\Omega} \frac{1}{2} 
\frac{|\bm_0|^2}{\rho_0} + e(\rho_0, \eta_0)\diff \bx <\infty$ if the following holds:

\begin{itemize}
\item The functions are \textbf{weakly continuous}: $\rho  \in C_{weak}([0,T];L^{\gamma}(\Omega) )$, 
$\bm  \in C_{weak}([0,T];L^{\frac{2\gamma}{\gamma+1}}(\Omega;\R^2) )$, $\eta \in  L^{\infty}(0, T; L^{\gamma}(\Omega)) \cap BV_{weak}([0,T];L^{\gamma}(\Omega)).$ 
\item  A measure $\mathfrak{E} \in  L^{\infty}(0,T;\mathcal{M}^+(\overline{\Omega}))$ (energy defect), exists, such that the \textbf{energy inequality} 
\begin{equation*}
 \int_{\Omega} \left[   \frac{1}{2} \frac{|\bm|^2}{\rho} + \rho e(\rho,\eta )\right]  ( \tau,\cdot) \diff \bx + \int_{\Omega}   \diff{\mathfrak{E}}(\tau)
 \leq  \int_{\Omega} \left[ \frac{1}{2} \frac{|\bm_0|^2}{\rho_0} +\rho_0 e(\rho_0,\eta_0) \right] \diff \bx
\end{equation*} 
is fulfilled for a.a.  $0\leq \tau\leq T$.
\item  The weak \textbf{equation of continuity}
\begin{equation*}
\left[  \int_{\Omega} \rho \varphi \diff \bx \right]_{t=0}^{t=\tau}= \int_{0}^\tau \int_{\Omega} 
\left[ \rho \partial_t \varphi + \bm \cdot \nabla_{\bx} \varphi  \right] \diff \bx \diff t
\end{equation*} 
is satisfied for any $0 \leq \tau \leq T$  and any $\varphi \in C_c^\infty((0,T)\times \Omega)$. 
\item  Let $
 \mathfrak{R}\in L^{\infty} \left(0,T; \mathcal{M}\left(\overline{\Omega}, \R^{d\times d}_{sym}\right) \right) 
$ be the Reynolds defect. The integral identity derived from the \textbf{momentum equation}
 \begin{align*}
 \left[  \int_{\Omega} \bm \cdot \mathbf{\varphi} \diff \bx\right]_{t=0}^{t=\tau}
 = &\int_{0}^\tau \int_{\Omega} 
 \left[ \bm \cdot \partial_t \mathbf{\varphi }+ 
 1_{\rho>0} \frac{ \bm \otimes  \bm }{\rho}   : \nabla_{\bx} \mathbf{\varphi} +1_{\rho>0} p(\rho, \eta)  \div_{\bx} \mathbf{\varphi} \right]
 \diff \bx \diff t 
   + \int_{0}^\tau \int_{\Omega} \nabla_{\bx} \mathbf{\varphi}: \diff{\mathfrak{R}}
 \end{align*}
holds for any $0 \leq \tau\leq t$ and any test function $  \mathbf{\varphi} \in C^\infty([0,T]\times \overline{\Omega};\R^d)$.
 \item  The  weak \textbf{entropy  inequality} 
\begin{equation*}
\begin{aligned}
\left[\int_{\Omega} \eta \varphi  \diff \bx \right]_{t=\tau_1-}^{t=\tau_2+} 
\leq&  \int_{\tau_1}^{\tau_2} \int_{\Omega} \left[ 
\eta \partial_t \varphi +\est{\nu; 1_{\tilde{\rho}>0} \left( \tilde{\eta} \frac{\tilde{\bm}}{\tilde{\rho}} \right) } \cdot \nabla_\bx \varphi  \right] \diff \bx \diff t\\
\eta(0-, \cdot) =& \eta_0
\end{aligned}
\end{equation*}
exists for any $0\leq \tau_1 \leq \tau_2 <T$, 
any $\varphi \in C_c^\infty((0,T)\times \Omega), \varphi\geq 0$, where $\{ \nu_{t,\bx}\}_{(t, \bx) \in (0,T) \times \Omega}$ is a 
parametrized (Young) measure
\begin{equation}\label{eq:Young}
\begin{aligned}
\nu \in L^{\infty}((0,T)\times \Omega;\mathcal{P}(\mathcal{F})), \mathcal{F}=\left\{ \tilde{\rho} \in \R, \tilde{\bm}\in \R^d, \tilde{\eta}\in \R  \right\};\\
\est{\nu, \tilde{\rho}} =\rho, \est{\nu, \tilde{\bm}} =\bm, \est{\nu, \tilde{\eta}} =\eta,\\
\nu_{t, \bx}\left\{ \tilde{\rho} \geq 0, (1-\gamma)\tilde{\eta} \geq \underline{s} \tilde{\rho} \right\}=1 \text{ for a. a. } (t,\bx) \in (0,T)\times \Omega; 
\end{aligned}
\end{equation} 
\item For some constants $0\leq c_1 \leq c_2,$ and 
$
\mathfrak{E} \geq \est{\nu; \frac{1}{2} \frac{|\tilde{\bm}|^2 }{\tilde{\rho}} + \tilde{\rho} e(\tilde{\rho}, \tilde{\eta})} - \left( \frac{1}{2} \frac{|\bm|^2}{\rho} +\rho e(\rho, \eta)\right),
$
the  \textbf{defect compatibility conditions}
$
c_1 \mathfrak{E} \leq \operatorname{tr} [\mathfrak{R}] \leq c_2 \mathfrak{E}
$
holds.
\end{itemize}
\end{definition}

From the weak formulation, one can eliminate completely the energy defect by setting $\mathfrak{E}=\operatorname{tr} [\mathfrak{R}]$ (modulo a multiplicative constant).
Both defects include the concentration and oscillation defects which are separately used in the definition of dissipative measure-valued (DMV) solutions. It can be shown, cf. \cite{feireisl2021numerics}, that following relation holds
\begin{equation*}
\begin{aligned}
 \mathfrak{R}= &\overline{\left[ 1_{\rho>0} \frac{\bm\otimes \bm }{\rho } +p(\rho) \mathbb{I}  \right] } -\est{\nu, 1_{\tilde{\rho}>0} \frac{\tilde{\bm}\otimes \tilde{\bm} }{ \tilde{\rho} } +p(\tilde{\tilde{\rho}}) \mathbb{I}  } \\
 +& 
 \est{\nu, 1_{\tilde{\rho}>0} \frac{\tilde{\bm}\otimes \tilde{\bm} }{ \tilde{\rho} } +p(\tilde{\tilde{\rho}}) \mathbb{I}  } 
 -
 \left(1_{\rho>0} \frac{\bm\otimes \bm }{\rho } +p(\rho) \mathbb{I}  \right),
 \end{aligned}
\end{equation*}
where the first two terms denote the concentration defect and the last two terms are called oscillation defect. Here, the overline term represent the weak limit of a sequence of approximated solutions $\CU^h=[\rho^h, \mathbf{m}^h, \eta^h]$ which will be generated later by our our consistent DG scheme, i.e. 
\begin{equation*}
  \frac{\bm^h\otimes \bm^h }{\rho^h } +p(\rho^h) \mathbb{I} \longrightarrow
  \overline{ \frac{\bm\otimes \bm }{\rho } +p(\rho) \mathbb{I} } \quad \text{ weakly}-(*) \text{ in } \mathcal{M}\left(\overline{\Omega}, \R^{d\times d}_{sym}\right),
\end{equation*}
where $\{ \nu_{t,\bx}\}_{(t, \bx) \in (0,T) \times \Omega}$ is the Young measure generated by $\CU^h=[\rho^h, \mathbf{m}^h, \eta^h]$.
\begin{remark}[Dissipative Weak Solutions versus Dissipative Measure-Valued Solution]\label{eq:Remark_DMS_DW}
As an alternative to dissipative weak solutions, one may apply \emph{dissipative measure-valued solutions}, cf. \cite[Definition 5.3]{feireisl2021numerics} which give a more detailed description. Only few differences can be recognized:
\begin{enumerate}
\item The time mapping $t\to \nu_{t, \bx} \in L^{\infty}(\Omega, \mathcal{P}(\mathcal{F}))$
is \textit{a priori} not continuous.
However, its barycenter
coordinates $t\to \est{ \nu_{t, \bx}, \tilde{\rho}} \in C_{weak}([0,T]; L^{\gamma}(\Omega))$ is continuos (similar for momentum and entropy).
\item 
As it is observed in \cite[Section 5.2]{feireisl2021numerics}, the only relevant piece of information of the limit process 
 is the value of some \emph{observables} like the barycenter of the measure. It converges strongly in a suitable norm.
Note that the barycenter of any DMV solution of the Euler system represents a DW solution. 
\end{enumerate}

\end{remark}

In case when a classical solution 
exists our generalized solution  coincides with the classical one. One speaks about \textbf{compatibility property}. 
In our framework, compatibility means that the defect measures used in the Definition 
\ref{def_dmv}  vanish as long as the DW solution is smooth enough. 
As it is shown in \cite{feireisl2021numerics}  if $[\rho, \bm, \eta]$ belongs to 
\begin{equation}\label{eq_regularity}
\rho \in C^1([0,T]\times \overline{\Omega}), \; \inf_{(0,T)\times \Omega} \rho>0, \; \mathbf{u} \in C^1([0,T]\times \overline{\Omega}; \R^d), \; \eta \in C^1([0,T]\times \overline{\Omega})
\end{equation}
then $[\rho, \bm, \eta]$ is a classical solution of the complete Euler system.
Moreover, see \cite[Theorem 5.7.]{feireisl2021numerics}.

\section{Discontinuous Galerkin Schemes} \label{se:flux_differencing}
In the following part, we will shortly introduce the considered DG methods and the relevant notations following \cite{pazner2021sparse}. Throughout the paper, we will confine ourselves to the semidiscrete scheme meaning that
the time will remain continuous, the discretization is done only in space. By using implicit methods, our investigation can be further developed to the fully discrete setting. However, explicit methods are the natural choice for solving hyperbolic conservation laws and we also use explicit ones in our numerical simulations in Section \ref{eq:numerics}. In Appendix \ref{se:extension}, we describe how we ensure practically the positivity of density and pressure as well as  the entropy inequality in the fully discrete setting.  

\subsubsection*{Notations}
The spatial domain $\Omega \subset \R^2$ is discretized  with a mesh of tensor-product elements, e.g. regular quadrilateral grid,  denoted by $\TT_h$. We denote the generic cell $K$ and the uniform mesh size with $h$. They  are given by 
$$K:= [x_{i-1/2,j}, x_{i+1/2,j}] \times [y_{i,j-1/2}, y_{i,j+1/2}]$$
with
$h:= x_{i+1/2,j}-x_{i-1/2,j}=y_{i,j+1/2}-y_{i,j-1/2}$, for simplicity. 
Extensions to  (unstructured) rectangular meshes with cell sizes $h_x\neq h_y$ are straightforward. For triangular grids we may follow the approach presented in \cite{chenreview}.
With $\partial K$ we denote the boundary of an element $K$ and by $\EE$ the set of all interfaces of all cells $K\in \TT_h$ where $\ee$ is one interface of $\partial K$. Between two elements $K^-$ and $K^+$ we have a normal vector $\bn$. We have  normal vector given by either $\bn=(n_x, 0)$ or $\bn=(0,n_y)$ depending on the interface. 
Let $\mathcal{Q}^p([-1,1]^2)$  be the space of all multivariate 
polynomials of degree at most $p$ in each variable.
On each element $K\in \TT_h$, we have a linear map  $T_K:[-1,1]^2\to K$  and    $\mathcal{Q}^p(K)$  is  spanned by functions $\phi \circ T_K^{-1}$.
The DG solution space $\VV^h$ is given by
 \begin{equation}\label{eq:space}
   \VV^h = \left\{ v^h\in L^1(\Omega)  \Big| v^h := v^h|_{K} \in \mathcal{Q}^p(K) \text{ for all } K\in \TT_h \right\}.
 \end{equation}
Approximated solutions to the Euler equations \eqref{eq_Euler_conservation} live in  the vector version of this 
 space  $\tilde{\VV}^h= [\VV^h]^4$. 
 To describe an element of the finite dimensional space $\VV^h$, we apply a nodal Gauss-Lobatto basis. We denote by  $\xi_i$ the Gauss-Lobatto points in the interval $[-1,1]$. Further,  $L_i$ is the Lagrange polynomial which fulfills $L_i(\xi_j)=\delta_{i,j}$. Here,  $\delta_{i,j}$ is the Kronecker delta. Lagrange polynomials form a basis  for $\mathcal{Q}^p([-1,1])$ in one dimension. We obtain a basis for
 $\mathcal{Q}^p([-1,1]^2)$  via the tensor product of the one-dimensional basis, i.e. $L_i(x)L_j(y)$.
 In each element, each component of the conservative variable vector of the Euler equation  is approximated by a polynomial in the reference space \eqref{eq:space}. The nodal values (interpolation points) are our degrees of freedom (DOFs) where we have to calculate the time-dependent nodal coefficients for all components (density, momentum, energy) in the following.
  To make this point clear, our numerical solution  is given by $\bU^h=(\rho^h, \bm^h, E^h)^T \in \tilde{\VV}^h$ 
  and each component is represented by a polynomial, e.g. in the reference element the approximated density is given by 
  \begin{equation}\label{eq:approx}
  \rho^h(x,y,t) = \sum_{i,j=0}^p \hat{\rho}^h(\xi_i, \nu_j, t) L_i(x) L_j(y)= \sum_{i,j=0}^p \hat{\rho}_{i,j}(t) L_i(x) L_j(y)
  \end{equation}
where $ \xi$ and $\nu$ are our Gauss-Lobatto nodes in $x-$ and $y-$direction
and 
$\hat{\rho}_{i,j}(t)$ is the time-dependent coefficient of our polynomial presentation, i.e. $\hat{\rho}_{i,j}(t)= \rho^h(\xi_i, \nu_j,t)$. 
Analogous notation holds for the other components of $\bU^h$.

%
%

\subsubsection*{DG formulation}

 We begin by defining the high-order DG discretization for the Euler equation \eqref{eq_Euler_conservation}. 
First, we multiply by a  test function $\bV^h \in \tilde{\VV}^h$ and integrate over the domain $\Omega$ \eqref{eq_Euler_conservation}.
By integrating twice the term with the space derivatives, we obtain the strong DG form\footnote{Note that at the continuous level, the strong and the weak DG forms are equivalent for a sufficiently smooth solution.}
 \begin{equation}\label{eq:DG_strong_analytical}
  \int_{\Omega} \partial_t \bU^h \cdot \bV^h \diff \bx +\sum_{K\in \TT_h} \int_K
  (\div \mathbf{f} (\bU^h) )\cdot \bV^h \diff \bx + \sum_{\partial K^- \in \TT_h} \int_{\partial K^-} (\bfnum(\bU^{h,-}, \bU^{h,+},\bn^-) - \mathbf{f} \cdot \bn^-) \cdot  \bV^{h,-}\diff s =0 
 \end{equation}
where $\mathbf{f}(\bU^h)=(\mathbf{f}_1,\mathbf{f}_2)$ with $\mathbf{f}_m$ as defined in \eqref{eq_Euler_conservation} and the numerical flux $\bfnum$ will be specified later in \eqref{eq:LLF}.  
Note that  a dot, i.e.  $\cdot$, product means that \eqref{eq:DG_strong_analytical}
 has to be solved for each component of the Euler equation separately. 
Later all the calculations in our high-order DG method are done in a reference element $I=[-1,1]^2$.
To evaluate the integrals, we proceed by choosing the Gauss-Lobatto nodes as collocated quadrature points. 
Finally, by this selection of a tensor-product basis and Gauss-Lobatto quadrature, the DG operators have a Kronecker-product structure as well. We denote by $\mat{M}_1$ on $[-1,1]$ the one-dimensional mass matrix.
It has diagonal form with quadrature weight on the diagonal.  Further, we obtain the one-dimensional differentiation matrix $\mat{D}_{1}$  by evaluating the derivatives of the basis functions at the nodal points.  We use the index here to clarify that we have the one-dimensional setting and working on the reference element $I$. 
To clarify the setting, let $\xi_j$ be the Gauss-Lobatto quadrature points $-1=\xi_0<\xi_1<\cdots<\xi_p=1$ in $[-1,1]$
with corresponding quadrature weights 
 $\{\omega_j\}_{j=0}^p$. The nodal Lagrangian basis is given by  $L_j(\xi_l)=\delta_{jl}$ and we can define 
the discrete inner $\est{u,v}_{\omega}:=\sum\limits_{j=0}^p \omega_j u(\xi_j) v(\xi_j)$ in one space-dimension.  Then, the above described operators are given by 
 \begin{itemize}
\item Difference matrix $\mat{D}_1$ with $\mat{D}_{1,jl} =L_l'(\xi_j)$
\item Mass matrix $\mat{M}_{1,jl}=\est{L_j,L_l}_{\omega} =\omega_j \delta_{jl}$, so that
   $\mat{M}_1 = \diag \{\omega_0, \dots, \omega_p\}$
\end{itemize}
Later, we need also the operators 
\begin{itemize}
\item Stiffness matrix $\mat{Q}_{1,jl}=\est{L_j',L_l}_{\omega} =\est{L_j,L_l'}_{\omega}$
\item Interface matrix $\mat{B}_1=\diag(-1,0,\cdots,0,1)$
\end{itemize}
Up to this point, these operators would work separately on every component of the conservative variable vector $\bU^h$, i.e. 
on the density $\rho^h$, momentum  $\bm^h$, and energy $E^h$. To extend these operators to the Euler system, a simple Kronecker product can be used. It is 
 \begin{equation*}
   \mat{\mathbf{M}}_1=\mat{M}_1 \otimes \IdxM_4 \qquad \mat{\mathbf{D}}_1=\mat{D}_1 \otimes \IdxM_4,
 \end{equation*}
where $ \IdxM$ is the $4\times4$-identy matrix.\\
Similar in two-dimension, we obtain the local mass matrix and differentiation matrix in the standard element $I$ through Kronecker products:
  $\mat{\mathbf{M}}_I= \mat{\mathbf{M}}_1\otimes  \mat{\mathbf{M}}_1,
  \mat{\mathbf{D}}_{1,I}= \IdxM\otimes \mat{\mathbf{D}}_1,
  \mat{\mathbf{D}}_{2,I}= \mat{\mathbf{D}}_1 \otimes \IdxM.$
To solve \eqref{eq:DG_strong_analytical}, we have to  evaluate the 
cell interface integrals. Due to the tensor structure ansatz, we evaluate at each interface the one-dimensional Gauss-Lobatto quadrature rule. Due to the Kronecker ansatz, we obtain the
following interface operators 
$
\mat{\mathbf{B}}_{\ee_1}=  \mat{\mathbf{M}}_1 \otimes \mat{\mathbf{B}}_1 \text{ and }  \mat{\mathbf{B}}_{\ee_2}= \mat{\mathbf{B}}_1 \otimes  \mat{\mathbf{M}}_1 
$
depending on the considered cell interfaces. The defined operators fulfill the summation-by-parts property meaning that they mimic discretely integration-by-parts. In Appendix \ref{eq:SBP property},  we also repeat the main SBP properties for completeness, cf. \cite{chenreview,offner2020stability}. \\
In  \eqref{eq:DG_strong_analytical}, we have to calculate  the time-dependent coefficients of our polynomial representation \\
$\bU^h= (\rho^h, \bm^h, E^h) \in\tilde{\VV}^h.$ We have to distinguish in numbering between the interpolation (quadrature) points in $x$- and $y$- direction in \eqref{eq:approx}. For simplicity, we are renumbering the points. We have $n_p=(p+1)^2$ quadrature/interpolation points\footnote{Alternative a multi-index can be used of the notation.} denoted by $\mathbf{\xi}_i$ in each element and $n_{b}=4p$ at the interfaces. On each face we have $p+1$ Gauss-Lobatto quadrature nodes but on the corners they intersect. The basis is given by $\mathbf{L}_i$ with $i \in \{1,n_p\}$.
 We denote by $\bvu$ the vector of coefficients (i.e. nodal values) of $\bU^h$ on $K$:
\begin{equation}\label{eq:coefficients}
\bvu= \left(\rho^h(\mathbf{\xi}_1), \bm^h(\mathbf{\xi}_1), E^h(\mathbf{\xi}_1 );
\rho^h(\mathbf{\xi}_2), \bm^h(\mathbf{\xi}_2),  E^h(\mathbf{\xi}_2 ); \dots ;\rho^h(\mathbf{\xi}_{n_p}), \bm^h(\mathbf{\xi}_{n_p}), E^h(\mathbf{\xi}_{n_p}) \right)^T.
\end{equation}
Let $\bvf_m$ ($m=1,2$) denote the vector of values of $\mathbf{f}_m(\bU^h)$ evaluated at the nodal points. For each cell interface $\ee \in \partial K \subset \EE$, we have to evaluate $
\mathbf{f}(\bU^h)$ at the one-dimensional Gauss-Lobatto nodes on the cell interface $\ee$ (where the trace of $\bU^h$ is taken from inside $K$ dotted with the scaled with normal vector $\bn$ facing outwards from $\ee$). We denote this by $\mat{\mathbf{R}}_{\ee_m}\bvf_{\ee_m}$. Likewise $ \bvecfnum_{\ee_m}$ denotes the nodal values of $\bfnum(\bU^{h,-}, \bU^{h,+},\bn^-)$.
With these operators, we can finally re-write the DG semidiscretization  \eqref{eq:DG_strong_analytical} on the reference element as follows 
\begin{equation}\label{eq:DG_standard}
 \partial_t \bvu +  \mat{\mathbf{D}}_{1,I}  \bvf_1 +\mat{\mathbf{D}}_{2,I}  \bvf_2= \mat{\mathbf{M}}_I^{-1}  \sum_{j \in \partial I} \mat{\mathbf{B}}_{j} \left(\mat{\mathbf{R}}_{j}\bvf_{j}-\bvecfnum_j \right),
\end{equation}
where $\partial I$ denotes the cell interfaces of the reference element.

 \subsubsection*{Entropy Stable DG Method}
 
Equation \eqref{eq:DG_standard} describes the classical discontinuous Galerkin spectral element method (DGSEM) in two-space dimension. The method is by construction not  entropy conservative/ dissipative. To obtain an high-order entropy dissipative DG method for the Euler equation, we apply the flux differencing approach. To this end we replace the volume flux  $\sum_{m=1}^2 \mat{\mathbf{D}}_{m,I}  \bvf_m $  in above equation \eqref{eq:DG_standard} using consistent, symmetric two-point  numerical fluxes. The resulting DG scheme in the reference element reads than 
 \begin{equation}\label{eq:DG_standard_2}
 \partial_t \bvu + 2 \left( \mat{\mathbf{D}}_{1,I} \bvecfnum_{1,Vol}(\bvu, \bvu)+\mat{\mathbf{D}}_{2,I} \bvecfnum_{2,Vol}(\bvu, \bvu) \right)= \mat{\mathbf{M}}_I^{-1}  \sum_{j \in \partial I} \mat{\mathbf{B}}_{j} \left(\mat{\mathbf{R}}_{j}\bvf_{j}-\bvecfnum_j \right),
\end{equation}
where $\bvecfnum_{m,Vol}$ denotes the numerical volume flux working on each degree of freedom and $\bvecfnum_j $ is the classical numerical flux at the interface. 
Alternatively, we can rewrite \eqref{eq:DG_standard_2} for each nodal value, i.e. $\bU_j^h:=(\rho^h(\xi_j), \bm^h(\xi_j), E^h(\xi_j))^T$, separately.  
\begin{equation}\label{eq:Shu_modified_two_nodal}
\begin{aligned}
 \frac{\diff{}}{\diff t} \bU_j^h + 2 \sum_{l=1}^{n_p} \left( \mathbf{D}_{1,jl} \mathbf{f}^{\operatorname{num}}_{1,Vol}(\bU_j^h,\bU_l^h) 
+ \mathbf{D}_{2,jl} \mathbf{f}^{\operatorname{num}}_{2,Vol}(\bU_j^h,\bU_l^h) \right)
=& \sum_{l=1}^{n_p} \frac{\tau_l}{\mathbf{\omega}_j} \left( \mathbf{f}_l  -\bfnum_j \right)\\
=& \frac{1}{\mathbf{\omega}_j} \left(\tau_{1,j} \mathbf{f}_{1,j}+ \tau_{2,j} \mathbf{f}_{2,j}- \tau_j \bfnum_j  \right)
\end{aligned}
\end{equation}
with $\tau_l, \tau_{1,j}, \tau_{2,j}= -1, 0, 1$ depending on the considered inteface\footnote{They are zero for internal quadrature points and $-1$ and $1$ at the corresponding interfaces} and $n_p$ are the quadrature points for the volume term. 

\subsubsection*{Numerical Fluxes}

An important aspect of the flux differencing approach 
\eqref{eq:Shu_modified_two_nodal} is the selection of numerical flux functions. In this manuscript, 
we use the local  Lax-Friedrich numerical flux function at the cell interfaces $\mathbf{f}^{\operatorname{num}}$ for simplicity.
The reason for this selection is that the Lax-Friedrich flux in its first order discretization is entropy stable\footnote{It is actually invariant domain preserving meaning the approximated solution remains in the physical meaningful domains, cf. \cite{abgrall2019reinterpretation, kuzmin2020monolithic}.}. The local Lax-Friedrich flux is given by 
\begin{equation}\label{eq:LLF}
 \fnum(\bU^{h,-}, \bU^{h,+}, \bn^-):= \frac{1}{2} \left(\mathbf{f}(\bU^{h,-}) + \mathbf{f}(\bU^{h,+}) \right)\cdot \bn^- - \frac{\lambda}{2} \left( \mathbf{f}(\bU^{h,+})-\mathbf{f}(\bU^{h,-}) \right),
\end{equation}
where $\lambda \geq \lambda_{max}$ is an upper bound for the maximum wave speed. For the Euler equations \eqref{eq_Euler_conservation}, it holds that 
$\lambda_{max}=\max\{ |\bu^h_{K^-}|+c_{K^-},|\bu^h_{K^+}|+c_{K^+} \}$ with $c=\sqrt{\gamma \frac{p}{\rho} }$.\\
For the numerical volume flux $\mathbf{f}^{\operatorname{num}}_{m,Vol}$ with $m=1,2$ , we select the consistent, symmetric and entropy conservative two-point flux of Ranocha \cite{ranocha2018generalised_2}. It is defined for each component separately: 
 \begin{equation}\label{eq:Ranocha_flux} 
 \begin{aligned}
  \fnum_{\rho,1} &=\logmean{\rho}\armean{u_1}, \quad  \fnum_{\rho u_1,1}  =\armean{ u_1} \fnum_{\rho,1} +\armean{ p}, \quad  \fnum_{\rho u_2,1}=\armean{ u_2} \fnum_{\rho} \\
  \fnumx_{E,1}&= \left(  \logmean{ \rho}  \left( \armean{ u_1}^2+\armean{ u_2}^2 - \frac{\armean{ u_1+u_2}^2}{2} \right) -\frac{1}{\gamma-1} \frac{\logmean{ \rho} }{\logmean{ \rho/p} } +
  \armean{ p} \right) \armean{u_1} \\
  &-\frac{\jump{p} \jump{\bu}}{4},
 \end{aligned}
 \end{equation}
 with $\mathbf{f}^{\operatorname{num}}_2$ defined analogously. Here, we have used the abbreviations $\armean{\rho}=\frac{\rho^{+}+\rho^-}{2} $ and
 $\logmean{\rho}=\frac{\rho^{+}-\rho^-}{\log \rho^+-\log \rho^-}$.  The chain rule for the logarithmic mean is given by
$
  \jump{\log a} = \frac{\jump{a}}{\logmean{a}}.
$
 The first three numerical fluxes are fixed whereas the flux $ \fnum_{E,1}$
 has been calculated using the condition $\jump{\mathbf{w}^T}\cdot \bfnum_1-\jump{\psi_1}=0$ with potential $\psi$. 
 It has been proven in \cite{ranocha2018generalised_2} that the numerical flux  \eqref{eq:Ranocha_flux} is entropy conservative (EC) and kinetic energy preserving (KEP)\footnote{Meaning that for a smooth solution the kinetic energy fulfills an additional conservation law.}. 
 We select those two fluxes on purpose since the properties of the 
scheme \eqref{eq:DG_standard_2} highly depend on the chosen fluxes as the following theorem confirms:

\begin{theorem}[Flux Differencing Theorem \cite{chenreview}]\label{th:shu_chen_2}
Assume that $\bvecfnum_{1,Vol}$ and $\bvecfnum_{2,Vol}$ are symmetric and consistent, and that $\bvecfnum$ is conservative and consistent. Then,
the flux differencing scheme \eqref{eq:Shu_modified_two_nodal}  is conservative and high-order accurate. If we further assume that 
both fluxes $\bvecfnum_{1,S}$ and $\bvecfnum_{2,S}$ are entropy conservative, and that $\mathbf{\vecfnum}$ is entropy stable, 
 \eqref{eq:Shu_modified_two_nodal} is entropy conservative within single elements and entropy stable across interfaces. 
\end{theorem}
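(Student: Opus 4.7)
The plan is to split the theorem into its two independent assertions—(i) conservation and high-order accuracy under symmetry/consistency of $\bvecfnum_{m,Vol}$ and conservation/consistency of $\bvecfnum$, and (ii) elementwise entropy conservation and interface entropy stability under the additional EC/ES hypotheses—and handle each by leveraging the SBP identities recalled in Appendix \ref{eq:SBP property}. For conservation, I would multiply \eqref{eq:Shu_modified_two_nodal} by $\omega_j$, sum over all nodes $j$ in a fixed element, and then sum over $K\in\TT_h$. Using the SBP identity $\mat{M}_1\mat{D}_1+(\mat{M}_1\mat{D}_1)^T=\mat{B}_1$ componentwise, together with the symmetry $\bvecfnum_{m,Vol}(\bU,\bV)=\bvecfnum_{m,Vol}(\bV,\bU)$ and the consistency $\bvecfnum_{m,Vol}(\bU,\bU)=\mathbf{f}_m(\bU)$, the internal nodal contributions in the flux-differencing volume term telescope and reduce to the boundary combination $\tau_{1,j}\mathbf{f}_{1,j}+\tau_{2,j}\mathbf{f}_{2,j}$ appearing on the right-hand side of \eqref{eq:Shu_modified_two_nodal}. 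These cancel exactly against the corresponding term in the surface sum, leaving only $-\tau_j\bvecfnum_j$; by single-valuedness of $\bvecfnum$ at interfaces, these telescope across neighbouring cells, proving global conservation.

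High-order accuracy follows from a standard Taylor expansion argument: for a smooth reference state $\bU$, the symmetry and consistency of $\bvecfnum_{m,Vol}$ imply $2\sum_l \mat{D}_{m,jl}\bvecfnum_{m,Vol}(\bU_j,\bU_l)=\sum_l\mat{D}_{m,jl}\mathbf{f}_m(\bU_l)+\mathcal{O}(h^{p+1})$, so that \eqref{eq:Shu_modified_two_nodal} is a consistent perturbation of the standard DGSEM \eqref{eq:DG_standard}. The classical truncation analysis for tensor-product DG on Gauss–Lobatto nodes then yields $(p+1)$-th order spatial accuracy for smooth solutions.

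The entropy statement is the main content. I would contract \eqref{eq:Shu_modified_two_nodal} with the discrete entropy variables $\mathbf{w}_j:=\eta'(\bU_j^h)$ and the quadrature weights $\omega_j$, and sum over $j$ within a single element. The time-derivative term produces $\tfrac{d}{dt}\sum_j\omega_j\,\eta(\bU_j^h)$ by the chain rule. For the volume contribution, I would use the rewriting $\mat{M}_1\mat{D}_1=\mat{Q}_1$ and the SBP identity to symmetrise and rewrite
\begin{equation*}
2\sum_{j,l}\omega_j\,\mathbf{w}_j^T\,\mat{D}_{m,jl}\,\bvecfnum_{m,Vol}(\bU_j,\bU_l)=\sum_{j,l}(\mathbf{w}_j-\mathbf{w}_l)^T\mat{Q}_{m,jl}\,\bvecfnum_{m,Vol}(\bU_j,\bU_l)+\text{boundary terms}.
\end{equation*}
Tadmor's defining EC identity $(\mathbf{w}_j-\mathbf{w}_l)^T\bvecfnum_{m,Vol}(\bU_j,\bU_l)=\psi_{m,j}-\psi_{m,l}$ collapses the symmetric double sum (by antisymmetry) into a pure boundary contribution in each coordinate direction, which then combines with the already-extracted surface data to produce exactly the discrete entropy flux $\mathbf{g}\cdot\bn$ at the element faces. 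Inside a single element this yields equality (entropy conservation); across interfaces the remaining cross term is of the form $\jump{\mathbf{w}}^T\bvecfnum-\jump{\psi\cdot\bn}$, which is non-positive by the entropy stability hypothesis on $\bvecfnum$, producing the claimed interface dissipation after summing over $\TT_h$.

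The main obstacle I anticipate is the bookkeeping for the two-dimensional tensor-product structure: carefully distinguishing interior from boundary nodes, correctly tracking the signs $\tau_l,\tau_{1,j},\tau_{2,j}\in\{-1,0,1\}$, and verifying that the one-dimensional SBP symmetrisation lifts cleanly through the Kronecker products $\mat{\mathbf{D}}_{1,I}=\IdxM\otimes\mat{\mathbf{D}}_1$ and $\mat{\mathbf{D}}_{2,I}=\mat{\mathbf{D}}_1\otimes\IdxM$ when applied row-by-row in each coordinate direction. Once the telescoping is established line-by-line in 1D, tensorisation gives the 2D statement, but the notational overhead is substantial and the signs in the volume-to-boundary reduction have to be matched precisely against the interface operators $\mat{\mathbf{B}}_{\ee_1}, \mat{\mathbf{B}}_{\ee_2}$.
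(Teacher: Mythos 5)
Your proposal is correct and follows essentially the same route as the paper: conservation via the SBP identity $\mat{M}\mat{D}+(\mat{M}\mat{D})^T=\mat{B}$ combined with the symmetry and consistency of the volume fluxes to collapse the double sum to boundary terms, and entropy conservation/stability via contraction with the entropy variables, Tadmor's condition $(\mathbf{w}_j-\mathbf{w}_l)^T\bvecfnum_{m,Vol}=\psi_{m,j}-\psi_{m,l}$ for the volume part, and the sign condition $\jump{\mathbf{w}}^T\bvecfnum-\jump{\psi}\leq 0$ at interfaces. You supply more detail than the paper on the entropy symmetrisation and on the accuracy claim (which the paper simply defers to the cited literature), but the decomposition and key identities are the same.
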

\begin{proof}

First, we have by using the SBP property 
{\small
\begin{equation}\label{eq:conservation}
 \begin{aligned}
&\int_{K} \partial_t \bU^h \diff \bx=
 \frac{\diff{}}{\diff t} \sum_{j=1}^{n_p} \omega_j \bU_j^h \\
 =&
 \sum_{j=1}^{n_p} \tau_j  (\mathbf{f}_j- \bfnum_j)
 -2 \sum_{j,l=1}^{n_p} 
 \left( \mathbf{M}_{jj} \mathbf{D}_{1,jl} \mathbf{f}^{\operatorname{num}}_{1,Vol}(\bU_j^h,\bU_l^h) 
+ \mathbf{M}_{jj}  \mathbf{D}_{2,jl} \mathbf{f}^{\operatorname{num}}_{2,Vol}(\bU_j^h,\bU_l^h) \right)\\
=& \sum_{j=1}^{n_p} \tau_j  (\mathbf{f}_j- \bfnum_j)
-\sum_{j,l=1}^{n_p} 
  \left( \left( \mathbf{M}_{jj} \mathbf{D}_{1,jl} +  \mathbf{D}_{1,lj} \mathbf{M}_{jj}  \right) \mathbf{f}^{\operatorname{num}}_{1,Vol}(\bU_j^h,\bU_l^h) 
+ \left( \mathbf{M}_{jj}  \mathbf{D}_{2,jl} +  \mathbf{D}_{2,lj} \mathbf{M}_{jj}  \right) \mathbf{f}^{\operatorname{num}}_{2,Vol}(\bU_j^h,\bU_l^h)  \right)\\
=& \sum_{j=1}^{n_p} \tau_j  (\mathbf{f}_j- \bfnum_j)
-\sum_{j,l=1}^{n_p} 
\left( \left( \mathbf{B}_{e_1,jl}  \right) \mathbf{f}^{\operatorname{num}}_{1,Vol}(\bU_j^h,\bU_l^h) 
+ \left( \mathbf{B}_{e_2,jl}  \right)\mathbf{f}^{\operatorname{num}}_{2,Vol}
(\bU_j^h,\bU_l^h) \right)
=
 \sum_{j=1}^{n_p} \tau_j  (\mathbf{f}_j- \bfnum_j)
-\sum_{j=1}^{n_p} 
  \tau_j  \mathbf{f}_j \\
  =& - \sum_{j=1}^{n_p} \tau_j \bfnum_j
 \end{aligned}
 \end{equation}}
and
\begin{equation}\label{eq:flux_two_dimension}
 \frac{\diff{}}{\diff t} \sum_{j=1}^{n_p} \omega_j \eta_j^h = \sum_{j=1}^{n_p} \left(\frac{1}{\omega_j}(\tau_{1,j} \psi_{1,j}  +\tau_{2,j}\psi_{2,j} -\tau_j \mathbf{w}_j^T \bfnum_j) \right)
 =\sum_{j=1}^{n_p} \tau_{j} \left(\psi_{\bn,j} -\mathbf{\vec{w}}_j^T \bfnum_j  \right).
\end{equation}
Since \eqref{eq:conservation} holds and  $\bfnum(\bU_j^{h,-}, \bU_j^{h,+},\bn^-)= -\bfnum(\bU_j^{h,+},\bU_j^{h,.},-\bn^-)$ cancels out, the scheme is local and global conservative. 
Due to \eqref{eq:flux_two_dimension} it is further local entropy conservative.  The entropy dissipation rate of the local Lax-Friedrich scheme at the interface point  $\bx_j$  is 
\begin{equation*}
 \tau_j \jump{\mathbf{w}^{h,T}_j} \cdot \bfnum(\bU_j^{h,-}, \bU_j^{h,+}, \bn^-) -\jump{\psi_j} \leq 0.
\end{equation*}
Summing all interface points together, the scheme is in total entropy dissipative. 
\end{proof}

For details on the high-order accuracy we refer to the above literature \cite[Theorem 4.1]{chenreview}. 
For the accuracy proof, we point out  that the  difference matrix $\mat{\mathbf{D}}$ is exact for polynomials of degree up to $n_p$ and $ \bfnum_{m, Vol}$ with $m=1,2$ is symmetric and consistent\footnote{The numerical volume flux $\bfnum (\bU_j^h,\bU_l^h)$ is  consistent with  $\mathbf{f}$ at least with $\Ol(h)$.}. Taking into account that $\bU^h \in \tilde{\VV}^h$ and $ \bfnum_{m, Vol}$ sufficiently smooth, we have 
\begin{equation}\label{approximiation_flux}
2\sum_{l=0}^{n_p} \mathbf{D}_{m,jl} \bfnum (\bU_j^h,\bU_l^h)-
\frac{\partial \mathbf{f}_m (\bU^h)}{\partial x_m} (\bx_j)=\Ol(h^{n_p})
\end{equation}
with a constant in $\Ol(h^{n_p})$ that depends on the regularity of $\mathbf{f}$ and $\mathbf{U}$. 
\begin{remark}[Extension to triangular grids, Gauss quadrature and further numerical fluxes]
We sum up:
\begin{itemize}
 \item It is possible to extend the  above results to triangular grids. Here, one can obtain  SBP operators if enough nodes are added at the cell interfaces. 
 \item Extensions using the Gauss-Legendre nodes or arbitrary volume or surface quadrature rules are also possible. This will lead to a more generalized SBP framework, cf. \cite{chenreview} and references therein.
 \item Instead of working with our numerical fluxes, i.e. Ranocha's flux \eqref{eq:Ranocha_flux} and the local Lax-Friedrich \eqref{eq:LLF}, we can use other fluxes like the entropy conservative flux of Chandrashekar for the volume part and the Godunov flux for the surface part.  In Section \ref{eq:numerics}, numerical tests are done also using  Chandrashekar's flux  to demonstrate the generality of our investigation.
\end{itemize}

\end{remark}

\subsection{Euler Equations - Weak BV Estimation}

We proceed by formulating the following assumption. Let $\rho^h(t), \bm^h(t), E^h(t):=\rho^h(t) \bu^h(t), E^h(t) \in \tilde{\VV}^h$ be numerical approximations of $\rho(t), \bm(t), E(t)$
obtained by our DG scheme \eqref{eq:Shu_modified_two_nodal}. 
 \begin{assumption}\label{assumtion_1}
  We assume that there exist two  positive constants $\underline{\rho}$ and $\overline{E}$ such that
\begin{equation}\label{vacuum}
\rho^h(t) \geq\underline{\rho} >0  \qquad E^h(t) \leq \overline{E} \text{ uniformly for } h\to 0.
\end{equation}
 \end{assumption}
The physical meaning of the first  assumption is that no vacuum appears. 
The second assumption \eqref{vacuum} implies then  that the speed $|\bu^h|$ is bounded since 
$
 |\bu^h|^2\leq \frac{2E^h}{\rho^h} \leq \frac{2\overline{E}}{\underline{\rho}}<C.
$
As it is described in   \cite{lukavcova2021convergence,feireisl2019convergence},  assumption \ref{assumtion_1} implies that  the density is also bounded from above and the energy is bounded from below.  Consequently, the pressure and temperature are bounded from above and below as well. 
Due to the application of bounded preserving limiters \eqref{se:extension}, we can get this property directly for all nodal values. 
In Section \ref{eq:numerics}, for the numerical experiments  we apply bounded preserving limiters and set the lowest value in the limiters to  $10^{-6}$. \\
Assumption \ref{assumtion_1}
is related also to the mathematical entropy function \eqref{eq:entropy_convex}. 
Indeed, it is  is equivalent to the strict convexity of the mathematical entropy function \eqref{eq:entropy_convex}. For completeness we recall the following Lemma from \cite[Lemma 3.1 and B2]{lukavcova2021convergence}: 
 \begin{lemma}
  Assumption  \ref{assumtion_1} is equivalent to the strictly positive definiteness of the the entropy Hessian, i.e. 
\begin{equation} \label{eq:hessian}
 \exists \underline{\eta}_0 >0: \frac{\diff{}^2\eta(\bU^h) }{\diff{\bU}^2} \geq \underline{\eta}_0 \mat{\mathcal{I}}
 \end{equation}
 where $\mathcal{I}$ is a unity  matrix.
 \end{lemma}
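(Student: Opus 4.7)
The plan is to treat this as a straightforward compactness argument on the state space. The entropy $\eta(\bU)=-\rho s/(\gamma-1)$ with $s=\log(p/\rho^\gamma)$ is a smooth, strictly convex function on the open physical state space
\[
\mathcal{D}=\bigl\{(\rho,\bm,E)\in\R\times\R^d\times\R \ :\ \rho>0,\ E-\tfrac{|\bm|^2}{2\rho}>0\bigr\},
\]
so $\eta''(\bU)$ is already positive definite at every interior point. The lemma therefore reduces to showing that the pointwise positivity upgrades to a \emph{uniform} lower bound precisely under the $L^\infty$ bounds of Assumption~\ref{assumtion_1}. Concretely, since $\lambda_{\min}(\eta''(\cdot))$ is a continuous, strictly positive function on $\mathcal{D}$, it attains a positive minimum on any compact subset $K\subset\mathcal{D}$; conversely, a uniform positive lower bound on this smallest eigenvalue forces the state to stay in such a compact set.

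For the forward direction ($\Rightarrow$), I would invoke the consequences of Assumption~\ref{assumtion_1} already recalled in the paragraph preceding the lemma: the lower bound $\rho^h\geq\underline{\rho}>0$ rules out vacuum, the upper bound $E^h\leq\overline{E}$ together with $\rho^h\geq\underline{\rho}$ gives $|\bu^h|\leq\sqrt{2\overline{E}/\underline{\rho}}$, and (as cited from \cite{lukavcova2021convergence,feireisl2019convergence}) one also obtains $\rho^h\leq\overline{\rho}$, $E^h\geq\underline{E}$, and pressure and temperature bounded above and below. These bounds place $\bU^h(t,\bx)$ in a compact set $K\subset\mathcal{D}$ uniformly in $h$. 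By continuity of $\bU\mapsto\lambda_{\min}(\eta''(\bU))$ on $\mathcal{D}$ and compactness of $K$, I set $\underline{\eta}_0:=\min_{\bU\in K}\lambda_{\min}(\eta''(\bU))>0$, which yields \eqref{eq:hessian}.

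For the reverse direction ($\Leftarrow$), I would exploit that the eigenvalues of $\eta''(\bU)$ degenerate at the boundary of $\mathcal{D}$: a direct computation of $\eta''$ in the variables $(\rho,\bm,E)$ (equivalently, of $\partial\mathbf{w}/\partial\bU$ using the explicit form \eqref{eq:entropy_variables}) shows that the spectrum is controlled by quantities of the type $1/\rho$, $1/(\rho T)$, and $\rho/p$ multiplied by positive smooth factors depending on $\bu$. Hence $\lambda_{\min}(\eta''(\bU))\to 0$ as $\rho\to 0$, as the internal energy $\rho e=E-|\bm|^2/(2\rho)\to 0$, or as $|\bU|\to\infty$. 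Consequently, the uniform bound $\eta''\geq\underline{\eta}_0\mat{\mathcal{I}}$ implies that $\bU^h$ remains in a compact subset of $\mathcal{D}$, which in particular gives the quantitative bounds $\rho^h\geq\underline{\rho}$ and $E^h\leq\overline{E}$ required by Assumption~\ref{assumtion_1}.

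The main obstacle is the reverse direction: the forward one is an off-the-shelf compactness/continuity argument once the $L^\infty$ consequences cited from the preceding paragraph are in hand, but the converse requires explicitly tracking how the Hessian of $\eta$ degenerates near the vacuum boundary $\rho=0$ and at infinity, i.e.\ verifying that no eigenvalue of $\eta''$ can stay bounded below if $\rho$ approaches $0$ or if $\bU$ escapes every compact set of $\mathcal{D}$. Since all the calculations are classical for the ideal-gas entropy, I would simply sketch the relevant asymptotics of the diagonal of $\eta''$ (or, more elegantly, argue via the dual Legendre transform between $\bU$ and $\mathbf{w}$) and refer to \cite[Lemma~3.1, B2]{lukavcova2021convergence} for the remaining bookkeeping.
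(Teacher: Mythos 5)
Your proposal is correct and follows essentially the same route as the paper: pointwise strict convexity of $\eta$ on the physical state space, upgraded to a uniform bound $\underline{\eta}_0>0$ by a compactness argument (the paper phrases this as a contradiction via a bounded subsequence converging to a state with degenerate Hessian, which is your extreme-value argument in disguise), with the reverse implication deferred to \cite{lukavcova2021convergence} exactly as the paper does. The only difference is that the paper explicitly computes the Hessian $\frac{\rho(\gamma-1)}{p^2}\mat{H}$ and verifies positivity of its leading principal minors, whereas you invoke strict convexity of the ideal-gas entropy as a classical fact; that is a legitimate shortcut, not a gap.
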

\begin{proof}
We demonstrate that  assumption \ref{assumtion_1} implies \eqref{eq:hessian}. We refer to \cite{lukavcova2021convergence} for the other direction of the proof. We suppress the dependence on $h$ in the following. 
 We calculate the Hessian (in our case for the two  dimensional case) using the entropy  and the entropy variables as defined in Section \ref{se:Foundation}.  Recalling that 
$
\eta=- \frac{\rho  s}{\gamma-1}$, $ s=\log \frac{p}{\rho^{\gamma}},
$
 the entropy variables are
$
\mathbf{w}=\eta'(\bU)= \left(\frac{\gamma}{\gamma-1}-\frac{s}{\gamma-1}-\frac{\rho |\mathbf{m}|^2}{2p}, \frac{m_1}{p},\frac{m_2}{p},
-\frac{\rho}{p} \right)^T, 
$
 $ p=(\gamma-1)(E- \frac{|\mathbf{m}|^2}{2 \rho})$.
Following Harten et al.  \cite{harten1983symmetric, harten1998convex}
we obtain the Hessian of $\eta$ with respect to the conservative variables $\bU=(\rho, m_1, m_2, E), \bm=(m_1,m_2)^T$:
$
 \frac{\diff{}\eta(\bU)^2 }{\diff{\bU}^2} =  \frac{\rho (\gamma-1)}{p^2} \mat{H} 
$
with the matrix 
\begin{equation}\label{eq:Hessian}
\mat{H}= \begin{pmatrix}
 \frac{1}{4} \left(\frac{|\bm|^2}{\rho^2}\right)^2 + \frac{\gamma}{(\gamma-1)^2} \frac{p^2}{\rho^2} & -\frac{m_1  |\bm|^2}{2 \rho^3} & -\frac{m_2  |\bm|^2}{2 \rho^3} &  \frac{1}{2} \frac{|\bm|^2}{\rho^2} - \frac{p}{\rho (\gamma-1)} \\
    -\frac{m_1  |\bm|^2}{2 \rho^3} & \frac{m_1^2}{\rho^2} + \frac{p}{\rho (\gamma-1)} &  \frac{m_1 m_2}{\rho^2} & - \frac{m_1}{\rho} \\
      -\frac{m_2  |\bm|^2}{2 \rho^3} &\frac{m_1 m_2}{\rho^2}  &  \frac{m_2^2}{\rho^2} + \frac{p}{\rho (\gamma-1)} 
      & - \frac{m_2}{\rho} \\
    \frac{1}{2} \frac{|\bm|^2}{\rho^2} - \frac{p}{\rho (\gamma-1)}  &  - \frac{m_1}{\rho}  
    &  - \frac{m_2}{\rho} & 1
\end{pmatrix}.
\end{equation}
Since $\gamma>1, \; \rho>0$ and $p>0$, we can demonstrate that $\mat{H}$ is positive definite. 
To this end,  we compute the determinates of  the major blocks of $\mat{H}$: 
\begin{equation*}
\begin{aligned}
M_{11}&=\mat{H}_{11}=  \frac{1}{4} \left(\frac{|\bm|^2}{\rho^2}\right)^2 + \frac{\gamma}{(\gamma-1)^2} \frac{p^2}{\rho^2} >0, \\
M_{22}&=\operatorname{det} \begin{pmatrix}
H_{11} & H_{12} \\
H_{21} & H_{22}
\end{pmatrix}
= \frac{p}{\gamma (\gamma-1)\rho}  \left\{  \left( \frac{ |\bm|^2}{2 \rho^2} -   \frac{\gamma}{\gamma-1} \frac{p}{\rho} \right)^2               +\gamma \frac{p m_1^2}{\rho^3}   + \frac{\gamma p m_2^2}{(\gamma-1) \rho^3} + \frac{(\gamma-1)|\bm|^4}{4 \rho^4}      \right\}
>0, \\
M_{33}&=\operatorname{det} \begin{pmatrix}
H_{11} & H_{12} & H_{13} \\
H_{21} & H_{22} & H_{23} \\
H_{31} & H_{32} & H_{33} 
\end{pmatrix}
=   \left(\frac{p^2}{(\gamma-1)^2 \rho^2} \right) \left(\frac{\gamma p}{(\gamma-1)\rho} \left(\frac{ |\bm|^2}{\rho^2} +  \frac{p}{(\gamma-1) \rho} \right)  + \frac{|\bm|^4}{4 \rho^4 }  \right) >0, \\
 M_{44}&=\operatorname{det} \mat{H}
=  \left( \frac{ p^4}{ \rho^4  (\gamma-1)^3}  \right)>0,
\end{aligned}
\end{equation*}
which implies that the Hessian is positive definite similar to \cite{harten1983symmetric}. \\
Next, we
demonstrate that the $\underline{\eta}_0>0$. We assume $\underline{\eta}_0=0$. Than, there is a subsequence $\{ \bU^{h_s }\}$ satisfying  $\frac{\diff{}\eta(\bU^{h_s})^2 }{\diff{\bU}^2} \leq \frac{1}{s}$. Since $\{\bU^{h_s} \}$ is bounded, we find again a subsequence (also denoted by $\bU^{h_s}$) which converges to some $\bU$. Hence, $\frac{\diff{}\eta(\bU)^2 }{\diff{\bU}^2}=0$
which is a contradiction. 
\end{proof}

 \begin{remark}
  Alternatively, we can focus on the eigenvalues of the Hessian as described in detail in \cite{lukavcova2021convergence}. We can estimate the lowest  eigenvalue by direct calculations (assuming $\gamma \in (1,2]$). 
It holds
\begin{equation*}
\underline{\eta}_0 \geq \frac{(\gamma-1)^2}{p} \min\left\{ \frac{p}{\rho (\gamma-1)( |\mathbf{u}|^2+2 )^2},   
 \frac{\rho (\gamma-1) }{4 ( p \gamma+\rho (\gamma-1) )}  , \frac{1}{4 \gamma (|\mathbf{u}|^2+1)} \right\} .
\end{equation*}
Assumption \ref{assumtion_1} yields  that $\underline{\eta}_0 $ is bounded away from zero. 
 \end{remark}

\subsubsection*{Weak BV estimate}
We denote by $\bU^h$ the unique solution of the DG scheme \eqref{eq:Shu_modified_two_nodal}  on the time interval $[0,T]$ with the initial data $\bU^h(0)$. 
We have seen that the entropy conservation/dissipation in one element is given as in 
\eqref{eq:flux_two_dimension}. Only the contribution at the boundary plays the role since the internal degrees of freedom cancel out 
and only the quadrature at the surface is essential. 
We denote with $\sigma$ the generic DOF on the interface between the two elements $K^-$ and $|K^+$.  It lies on the edge between $K^-$ and $|K^+$.
Due to our investigation in the proof 
of Theorem \ref{th:shu_chen_2}, we know that only the boundary terms remain.
Adding up the elements together, we find that 

\begin{align*}
\sum_{ K \in \TT_h} \frac{\dd } {\dd t} \left(  \frac{h^2}{4} \sum_{j=1}^{n_p} \omega_j \eta_j \right)& \leq \sum_{ K \in \TT_h}  \frac{h^2}{4} \sum_{j=1}^{n_p} \tau_{j}  \omega_{j}  \left(\psi_{\bn,j} -\mathbf{W}_j^{h,T} \bfnum_j  \right)\\
 &\leq \sum_{\sigma \in \EE} \frac{h}{2} \omega_{\sigma}   \left( \jump{(\psi_{\bn,\sigma})} -\jump{\mathbf{W}^h}_{\sigma} \bvecfnum_{\sigma}\right).
\end{align*}
$\omega_{\sigma}$ is the corresponding quadrature weight from the one-dimensional setting depending on the fact whether the boundary is in $x$ or $y$ direction\footnote{On each boundary, we have $p+1$ quadrature points (Gauss-Lobatto nodes). }.
We rewrite above equation as follows 
\begin{align}\label{eq_equa}
\int_{\Omega}  \frac{\dd } {\dd t} \eta( \bU^h(t)) \dd \bx -  \sum_{\sigma \in \EE} \frac{h}{2} \omega_{\sigma}   \left( \jump{(\psi_{\bn,\sigma})} -\jump{\mathbf{W}^h}_{\sigma} \bvecfnum_{\sigma}\right)\leq 0.
\end{align}
We  focus on the local Lax-Friedrich flux \eqref{eq:LLF} for the edge (surface) integrals (numerical flux). 
The entropy residual $ \left( \jump{(\psi_{\bn,\sigma})} -\jump{\mathbf{W}^h}_{\sigma} \bvecfnum_{\sigma}\right)$ in DOF $\sigma$ can be expressed as  
\begin{equation}\label{ent_residual}
r^h_\sigma= \delta_\sigma \jump{\bU^h}_{\sigma} \jump{\mathbf{W}^h}_{\sigma},
\end{equation}
where  $\delta_\sigma > \lambda_\sigma/2>0$, cf. \cite{feireisl2021numerics}. In case of the Euler equations, it holds that $\lambda_\sigma= \max\{ |\textbf{u}_{K^-}|+ c_{K^+},  |\textbf{u}_{K^+}|+ c_{K^+} \}$ with $c=\sqrt{\gamma \frac{p}{\rho}}$, respectively. 
The  speed of propagation is finite, i.e. there exists $\overline{\lambda}>0$ such that 
$\lambda(\bU^h(t))\leq \overline{\lambda}$ uniformly for $t\in [0,T]$ and $h \to 0$. 
Therefore, it follows directly by integrating in time 
\begin{align}\label{eq_equa}
 \int_{\Omega}  \eta( \bU^h(\tau)) \dd \bx - \int_0^\tau  \int_{\EE} r^h \dd S_{\sigma}  \leq  \int_{\Omega}  \eta( \bU^h(0)) \dd \bx .
\end{align}
With \eqref{eq:hessian} and the mean value theorem we get
\begin{equation*}
\jump{\bU}_{\sigma}= \left( \bU'(\tilde{\mathbf{W}})) \right) \jump{\mathbf{W}}_{\sigma} = \left(  \frac{\diff{}^2\eta(\tilde{\bU}) }{\diff{\bU}^2}\right)^{-1} \jump{\mathbf{W}}_{\sigma} ,
\end{equation*}
and thus
\begin{equation}
\underline{\eta}_0\jump{\bU^h}_{\sigma}\leq \jump{\mathbf{W}^h}_{\sigma}.
\end{equation}
Consequently, we have 
\begin{equation}\label{weak_BV}
\frac{\underline{\eta}_0}{2} \int_{0}^\tau  \sum_{\sigma \in \EE} \frac{h}{2} \omega_{\sigma}  \lambda_\sigma \jump{\bU^h}_{\sigma}^2 \diff t \leq
\int_{0}^T \sum_{\sigma \in \EE} \frac{h}{2} \omega_{\sigma}  \lambda_\sigma \jump{\bU^h}_{\sigma} \jump{\mathbf{W}}_{\sigma} 
\diff t.
\end{equation}
Now, it remains to demonstrate the weak BV condition. Using Hölder inequality, we get
\begin{equation}\label{weak_BV_calc}
\int_{0}^\tau   \sum_{\sigma \in \EE} \frac{h^2}{2}  \omega_{\sigma}   \lambda_\sigma   \left|\jump{\bU^h}_{\sigma} \right| \diff t  
\leq  \left( \int_{0}^\tau  \sum_{\sigma \in \EE} \frac{h^2}{2}  \omega_{\sigma}  \lambda_\sigma \diff t \right)^{1/2}
 \left( \int_{0}^\tau  \sum_{\sigma \in \EE} \frac{h^2}{2}  \omega_{\sigma}   \lambda_\sigma  \left|\jump{\bU^h}_{\sigma} \right|^2  \diff t \right)^{1/2}.
 \end{equation}
Inequalities \eqref{eq_equa} and \eqref{weak_BV}  imply that the second term tends to zero. 
Due to assumption \ref{assumtion_1}, $\lambda_\sigma$ is uniform bounded and  the discrete trace inequality:
$
\norm{f^h}_{L^p(\partial K)} \leq h^{-1/p} \norm{f^h}_{L^p(K)}, \;1\leq p\leq \infty
$
for any piecewise polynomial $f^h\in \tilde{\VV}^h$, cf.  \cite{feireisl2021numerics}, it follows directly that also the first term is bounded.
Consequently, 
we obtain the weak BV estimate:
\begin{equation}
 \int_{0}^\tau   \sum_{\sigma \in \EE}  \frac{h^2}{2} \omega_{\sigma}   \lambda_\sigma   \left|\jump{\bU^h}_{\sigma} \right| \diff t \to 0 \text{ as } h\to 0.  
\end{equation}

 \begin{remark}
 The above consideration holds also for the Godunov flux instead of the local Lax-Friedrich flux. 
 Indeed, the Godunov flux would lead to a different entropy residual  \eqref{ent_residual} but one obtains similarly weak BV estimates, cf. \cite{lukavcova2021convergence} for details.  
  \end{remark}

\section{Consistency of the DGSEM}\label{se:Consistency}
In the following, we prove the  consistency of our DG scheme \eqref{eq:Shu_modified_two_nodal}. 
Although the approximated problem was solved using the conservative variables $\bU^h=[\rho^h, \mathbf{m}^h, E^h]$, 
we investigate the consistency in terms of the conservative-entropy variables
$\CU^h=[\rho^h, \mathbf{m}^h, \eta^h]$ 
where $E^h=\frac{|\bm^h|^2}{2 \rho^h}+ e(\rho^h, \eta^h)$.
 Since the density and pressure are strictly 
positive, there is a bijective mapping between the conservative variables $\bU^h$ and the conservative-entropy variables $\CU^h$.
We will show that for the  numerical solution  $\CU^h=(\rho^h, \bm^h, \eta^h)$ calculated by  \eqref{eq:Shu_modified_two_nodal}
\begin{equation}\label{equ:consistent}
 \left[\int_{\Omega} \CU^h \cdot \varphi\diff \bx \right]_{t=0}^{t=\tau}=\int_0^\tau \int_{\Omega}  \partial_t \varphi \cdot  \CU^h + \bbf(\CU^h): \nabla_\bx \varphi \diff \bx \diff t +\int_0^\tau \mathbf{e}^h(t,\varphi) \diff t
\end{equation}
holds for all $\varphi \in C^{p+1}( [0,T]\otimes \overline{\Omega}, \R^4)$ where the error  $\mathbf{e}^h\to 0$ if $h\to 0$. We have to specify  $\mathbf{e}^h$ and in the following we describe the way to ensure \eqref{equ:consistent}.  \subsubsection*{Consistency Errors}
First, we realize that  for all $\varphi \in C^{p+1}( [0,T]\otimes \overline{\Omega}, \R^4)$
\begin{equation}\label{eq_consistent_2}
  \left[\int_{\Omega} \CU^h  \varphi\diff \bx \right]_{t=0}^{t=\tau}
  = \int_0^\tau \int_{\Omega} \frac{\diff{} }{\diff t} \left( \CU^h \varphi\right)\diff \bx \diff t 
  = \int_0^\tau \int_{\Omega} \CU^h \partial_t \varphi +  \varphi \partial_t \CU^h   \diff \bx\diff t
\end{equation}
and for the last term, we have to   apply the DG scheme \eqref{eq:Shu_modified_two_nodal} for $\partial_t \CU^h$ in tensor structure setting.
We have to determine the last terms of \eqref{equ:consistent}  using  \eqref{eq:Shu_modified_two_nodal} and derive a relation with equation \eqref{eq_consistent_2}. 
First, we note that in DG the test functions are taken from the space $\VV^h$ and not from $C^{p+1}$.
 We denote by $\Pi_h$ the projection  into our solution space  $ \VV^h$, i.e. projection on piecewise $\PP^p$ elements. We have the following interpolation errors then 
 \begin{equation}\label{interpolation_error}
 \norm{\Pi_h \varphi-\varphi}_{L^{\infty}(\Omega)} \leq c h^{p+1} \norm{\varphi}_{W^{p+1,\infty}(\Omega)}
 \end{equation}
with Sobolev norm $\norm{\cdot}_{W^{p+1,\infty}}$,  cf. \cite[Appendix]{calzado2015truncation}. 
We use the notations $\norm{\cdot}$, $\norm{\cdot}_2$ for the $L^1-$ and $L^2-$norm in the following.
With this, we have directly the  relation for the second term in \eqref{eq_consistent_2}:
\begin{equation*}
 \int_0^\tau \int_{\Omega}   \varphi \partial_t \CU^h   \diff \bx \diff t =
   \int_0^\tau \int_{\Omega}  \underbrace{\left(  \varphi-\Pi_h \varphi \right)}_{\Ol(h^{p+1})}  \partial_t \CU^h   \diff \bx \diff t 
   + \int_0^\tau  \int_{\Omega}  \left( \Pi_h \varphi \right)  \partial_t \CU^h   \diff \bx \diff t.
\end{equation*}
The first term is bounded by $\Ol(h^{p+1})$ and tends to zero for $h\to 0$. 
Using  DG formulation, e.g. \eqref{eq:DG_strong_analytical} with $  \bV=\Pi_h \varphi$ we get 
\begin{equation*}
\begin{aligned}
 \int_{\Omega} \partial_t \CU^h \Pi_h \varphi \diff \bx &+\sum_{K\in \TT_h} \int_K
  (\div_h \mathbf{f}^{vol}  (\CU^{h,-}, \CU^{h,+} )   )\Pi_h \varphi  \diff \bx \\
  &+ \sum_{\partial K\in \TT_h} \int_{\partial K^-} \left( \bfnum(\CU^{h,-}, \CU^{h,+}) - \mathbf{f}(\CU^h) \right)  \bn^-  \Pi_h \varphi^{-}
  \diff s=0,
\end{aligned}
\end{equation*}
where we have already applied discrete divergence on the numerical volume fluxes, 
i.e. $$\div_h \mathbf{f}^{vol} (\CU^{h,-}, \CU^{h,+} )  \sim \sum_{m=1}^2  \mat{\mathbf{D}}_{m} \bvecfnum_{m,S}(\bvu,\bvu).$$ 
We leave for a moment the question of numerical integration and nodal points to simplify the notation and calculation. We include them latter in numerical simulations. Recall, that we can use integration by parts since our operators are build to  fulfill the SBP property. 
We have
\begin{equation}
\sum_{K\in \TT_h} \int_K
  \div_h \mathbf{f}^{vol} \Pi_h \varphi  \diff \bx 
  =\underbrace{- \sum_{K\in \TT_h} \int_K \mathbf{f}^{vol} \nabla_h   
  (\Pi_h \varphi ) \diff \bx}_{T_1} + \underbrace{
  \sum_{\partial K\in \TT_h} \int_{\partial K^-}   \mathbf{f}^{vol} 
  \bn^-  \Pi_h \varphi^{-} \diff s}_{T_2}.
\end{equation}
First, we obtain
\begin{equation*}
T_1= -\underbrace{\sum_{K\in \TT_h} \int_K \left( \mathbf{f}^{vol} (\CU^{h,-}, \CU^{h,+} ) -\mathbf{f}(\CU^h) \right)\nabla_h   
  (\Pi_h \varphi ) \diff \bx }_{T_{11}} -\underbrace{
   \sum_{K\in \TT_h} \int_K\mathbf{f}(\CU^h) \nabla_h   
  (\Pi_h \varphi ) \diff \bx}_{T_{12}}.
\end{equation*}
Comparing above equation with \eqref{equ:consistent}, we want to keep $T_{12}$ . 
Since $\norm{\CU^h}_{L^{\infty}(\Omega)} \leq C$ and 
$$
\norm{\nabla_h \Pi_h \varphi -\nabla_\bx \varphi} \leq h \norm{\varphi}_{C^2(\Omega)},
$$
we have 
\begin{equation*}
\left|     \sum_{K\in \TT_h} \int_K\mathbf{f}(\CU^h) \nabla_h   
  (\Pi_h \varphi ) \diff \bx -    \sum_{K\in \TT_h} \int_K\mathbf{f}(\CU^h) \nabla_\bx  
   \varphi \diff \bx \right| \leq c h.
\end{equation*}
For $h \to 0$, this difference tends to zero and we obtain
\begin{equation}
 \sum_{K\in \TT_h} \int_K\mathbf{f}(\CU^h) \nabla_h   
  (\Pi_h \varphi ) \diff \bx \to    \int_ \Omega \mathbf{f}(\CU^h) \nabla_\bx  
   \varphi \diff \bx,
\end{equation}
which yield the flux term in \eqref{equ:consistent}. 
This means that the remaining terms need to be the error terms. 
To prove this,  we need an additional assumption expressing smoothness of a numerical solution inside an element $K$.  
 We assume that inside each element $K$ we have 
\begin{equation}\label{eq_assumption_2}
|\CU^h(\bx_j) - \CU^h(\bx_l)| \leq c h \qquad \forall \bx_j, \bx_l \in K. 
\end{equation}
Due to  Assumptions \ref{assumtion_1} and \eqref{eq_assumption_2}, and the Lipschitz continuity of $\mathbf{f}^{vol} (\CU^{h,-}, \CU^{h,+} )$, we get for $ T_{11}$:
\begin{equation*}
\begin{aligned}
\sum_{K\in \TT_h} \int_K \left( \mathbf{f}^{vol} (\CU^{h,-}, \CU^{h,+} ) -\mathbf{f}(\CU^h) \right)\nabla_h   
  (\Pi_h \varphi ) \diff \bx
  &\leq  c \norm{\varphi}_{C^2(\Omega)} \sum_{K\in \TT_h} \int_K
  \underbrace{\left|  \CU^{h,+} -  \CU^{h,-}\right|}_{\leq ch}  \diff \bx \\
  &\leq  c \norm{\varphi}_{C^2(\Omega)} |\Omega | h, 
  \end{aligned}
\end{equation*}
where $|\Omega | $ is the area of the domain. Consequently,  $T_{11} \to 0$ if $h\to 0$.  
We proceed with the boundary integrals: 
\begin{equation*}
  \sum_{\partial K\in \TT_h} \int_{\partial K^-}  \left( \mathbf{f}^{vol}(\CU^{h,-}, \CU^{h,+})  
 -\mathbf{f}(\CU^h) \right) \bn^-  \Pi_h \varphi^{-} \diff s+
 \sum_{\partial K\in \TT_h} \int_{\partial K^-} \bfnum(\CU^{h,-}, \CU^{h,+})    \bn^-  \Pi_h \varphi^{-}
  \diff s.
\end{equation*} 
The first flux, $ \mathbf{f}^{vol}$, is Ranocha's flux whereas the second flux, $\bfnum$ is the local Lax-Friedrich flux. 
We demonstrate in the following that all terms vanish under mesh refinement. 
Indeed, 
\begin{equation}\label{T_3_zero}
\begin{aligned}
  \sum_{\partial K\in \TT_h} \int_{\partial K^-}
  \mathbf{f}(\CU^h) \bn^-  \Pi_h \varphi^{-} \diff s
  =& \sum_{\partial K\in \TT_h} \int_{\partial K^-}
  \mathbf{f}(\CU^h)  \Pi_h \varphi^{-}  \bn^- \diff s\\
  =& \sum_{\partial K\in \TT_h} \int_{\partial K^-}
  \mathbf{f}(\CU^h)  \left(\Pi_h \varphi^{-}  -\varphi \right)\bn^- \diff s 
  + \sum_{\partial K\in \TT_h} \int_{\partial K^-}
  \mathbf{f}(\CU^h)  \varphi \bn^- \diff s .
\end{aligned}
\end{equation}
Due to  Assumptions \ref{assumtion_1}, we have $|\mathbf{f}( \CU^h)| \leq c$
and using \eqref{interpolation_error}, we obtain 
\begin{equation*}
 \sum_{\partial K\in \TT_h} \int_{\partial K^-}
  \mathbf{f}(\CU^h)  \left(\Pi_h \varphi^- -\varphi \right)\bn^- \diff s  \leq c h^p \underbrace{\sum_{\partial K\in \TT_h} \int_{\partial K^-} h}_{\leq c} = \Ol(h^p).
\end{equation*}
We get by using the Gauss theorem 
\begin{equation*}
\sum_{\partial K\in \TT_h} \int_{\partial K^-}
  \mathbf{f}(\CU^h)  \varphi \bn^- \diff s =
  \sum_{ K\in \TT_h} \int_{K}
  \div_{\bx}
  \left(\mathbf{f}(\CU^h)  \varphi \right)  \diff \bx
  = \int_{\Omega } \div_{\bx}
  \left(\mathbf{f}(\CU^h)  \varphi \right)  \diff \bx
  =\int_{\partial \Omega}  \underbrace{ \left(\mathbf{f}(\CU^h)  \varphi \right)}_{=0} \bn^-\diff s=0.
  \end{equation*}
  In the last step the periodic or no-flux boundary conditions have been applied. 
  The same consideration holds for both numerical fluxes. We
  have 
  \begin{equation*}
  \begin{aligned}
   &\sum_{\partial K\in \TT_h} \int_{\partial K^-} \bfnum(\CU^{h,-}, \CU^{h,+})    \bn^-  \Pi_h \varphi^{-}
  \diff s\\
   =&
   \sum_{\partial K\in \TT_h} \int_{\partial K^-}  \bfnum(\CU^{h,-}, \CU^{h,+})  \underbrace{\left(   \Pi_h \varphi^{-}- \varphi \right)}_{\Ol(h^{p+1})}\bn^-
  \diff s 
- \sum_{\partial K\in \TT_h} \underbrace{\int_{\partial K^-} \bfnum(\CU^{h,-}, \CU^{h,+})   \varphi \bn^-  \diff s}_{=0} =\Ol(h^p).
  \end{aligned}
  \end{equation*}
    Here, the second term vanishes due to the conservativity of the numerical fluxes and the boundary conditions. 
 We have shown that all remaining terms vanishes if $h\to 0$. 
 \begin{remark}[Including quadrature formulas]
  Till now, we have neglected the use of numerical integration and that $\mathbf{f}(\CU^h)$ is as well approximated by a polynomial $\mathbf{F}^h$ at the nodal Gauss-Lobatto points. Including these points lead to an additional quadrature error which vanishes under mesh refinement. To see this, we consider e.g. 
\begin{equation}\label{consisteny_estimate_1}
\begin{aligned}
\sum_{K \in \TT}\int_{K} \div \bbf(\CU^h) \Pi_h \varphi \diff \bx =
\sum_{K \in \TT}\int_{K} \div  \left(\bbf(\CU^h) -\mathbf{F}^h \right) \Pi_h \varphi \diff \bx + \sum_{K \in \TT}\int_{K} \div  \mathbf{F}^h \Pi_h \varphi \diff \bx \\
=\sum_{K \in \TT}  \frac{h^2}{4} \sum_{j=1}^{n_p} \omega_j \underbrace{ \left[ \div \left(\bbf(\CU^h) -\mathbf{F}^h \right) \Pi_h \varphi 
\right](\bx_j) }_{=0.}
+\sum_{K \in \TT} \frac{h^2}{4} \sum_{j=1}^{n_p} \omega_j \left[ \div  \mathbf{F}^h  \Pi_h \varphi 
\right](\bx_j) +\mathbf{e}^h_{qu}
\end{aligned}
\end{equation} 
In the last equation, we have used the Gauss-Lobatto quadrature rule where $\mathbf{e}^h$ denotes the quadrature error of the polynomial $\div \mathbf{F}^h  \Pi_h \varphi $. 
Actually, it is the difference between the exact integration and the discrete inner product. If the polynomial degree of the product is equal to or less than $2p-1$ in each direction, this error vanishes. Moreover, $\deg(  \div  \mathbf{F}^h  \Pi_h \varphi ) \leq 2p-1$ holds  in one direction. Therefore, we get an additional error term for each direction.
It is well-known that the remainder of the Gauss-Lobatto quadrature error for a given function $g\in C^\infty(I)$ with $I= (x_l,x_l+h)$ is determined by 
$R_n(g)=\frac{-p (p-1)^3((p-2)!)^4 }{(2p-1)((2p-2)!)^3} h^{2p-1}g^{2p-2}(\xi)$ with $\xi \in I$, cf. \cite[page 888]{abramowitz1972handbook}. Due the tensor structure setting, we can apply this result to smooth function $\mathbf{F}^h \Pi_h \varphi$. Consequently, the leading error term on the right hand side is  $\Ol(h^{2p-1})$. For $h\to 0$ and due to the boundedness of the remaining terms, we get
that $|\mathbf{e}^h_{qu}|\to 0$.\\
Finally, we study the first term \eqref{consisteny_estimate_1} which describes the truncation and aliasing errors since $
\div \bbf(\CU^h) \notin  \mathcal{Q}^p$ for arbitrary $\bx \in K$. However, we evaluate the functions on the quadrature points which are actually the interpolation points. 
Therefore, we have $\mathbf{f}(\CU^h)(\bx_j)=\mathbf{F}^h(\bx_j)$  and this term cancels out.
The above consideration can be brought to all of the remaining terms and we obtain our result.
 \end{remark}

 In summary, we have shown the consistency of the DGSEM method \eqref{eq:Shu_modified_two_nodal} for the Euler equation and summarize:

\begin{theorem}[Consistency Formulation]\label{th_consistency}
 Let $\CU^h$ be a solution of the DG scheme \eqref{eq:Shu_modified_two_nodal} on the interval $[0,T]$ with the initial data $\CU^h_0$. Under our assumptions \ref{assumtion_1} and \eqref{eq_assumption_2}, we have the following results for 
 all $\tau \in (0,T]$: 
 \begin{itemize}
  \item for all $\varphi \in C^{n_p+1}([0,T] \times \overline{\Omega})$:
  \begin{equation}\label{eq:consistency_rho}
 \left[  \int_{\Omega} \rho^h \varphi \diff \bx \right]_{t=0}^{t=\tau} =\int_0^\tau \int_{\Omega} \rho_h \partial_t \varphi +\bm^h \cdot \nabla_{\bx} \varphi \diff \bx \diff t
 +\int_0^\tau e_{\rho^h} (t,\varphi) \diff t;
  \end{equation}

  \item for all $\varphi \in C^{n_p+1}([0,T] \times \overline{\Omega};\R^2)$:
  \begin{equation}\label{eq:consistency_m}
 \left[  \int_{\Omega} \bm^h \mathbf{\varphi} \diff \bx \right]_{t=0}^{t=\tau} =\int_0^\tau \int_{\Omega} \bm^h \partial_t \mathbf{\varphi} +\frac{\bm^h\otimes\bm^h}{\rho^h} :  \nabla_{\bx}\mathbf{\varphi}  +p^h \div_{\bx} \mathbf{\varphi} \diff \bx \diff t + \int_0^\tau e_{\bm^h} (t,\mathbf{\varphi}) \diff t;
  \end{equation}
  
  \item for all $\varphi \in C^{n_p+1}([0,T] \times \overline{\Omega}), \; \varphi\geq 0$:
  \begin{equation}\label{eq:consistency_eta}
 \left[  \int_{\Omega} \eta^h \varphi \diff \bx \right]_{t=0}^{t=\tau} \leq \int_0^\tau \int_{\Omega} \eta^h \partial_t \varphi + (\eta^h \bu^h) \cdot \nabla_\bx \mathbf{\varphi} \diff \bx \diff t + \int_0^\tau e_{\eta^h} (t,\varphi) \diff t;
  \end{equation}
  \item
  $ \int_{\Omega} E^h(\tau)\diff \bx =\int_{\Omega} E^h_0 \diff \bx$
  \item The errors  $e_{j^h}$, $(j=\rho, \bm, \eta)$ tend to zero under mesh refinement 
  \begin{equation}\label{eq:consistency_error}
   \norm{e_{j^h}}_{L^1(0,T)} \to 0 \text{ if } h\to 0. 
  \end{equation}
 \end{itemize}
\end{theorem}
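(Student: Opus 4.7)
The plan is to prove each of the four consistency identities \eqref{eq:consistency_rho}--\eqref{eq:consistency_eta} and the energy conservation in essentially the same way, by specializing the calculation already sketched before the theorem statement: test the scheme \eqref{eq:Shu_modified_two_nodal} against $\Pi_h \varphi$, use the SBP property to perform discrete integration by parts, and then absorb all terms that are not of the target form into a vanishing error $e_{j^h}(t,\varphi)$. Concretely, starting from the identity
\[
\left[\int_{\Omega} \CU^h\,\varphi\diff \bx\right]_{t=0}^{t=\tau}=\int_0^\tau\int_{\Omega}\CU^h\partial_t\varphi \diff\bx\diff t+\int_0^\tau\int_{\Omega}\varphi\,\partial_t\CU^h \diff\bx\diff t,
\]
I would insert $\Pi_h\varphi$ into the last integrand, paying the interpolation error $\|\varphi-\Pi_h\varphi\|_{L^\infty}=\Ol(h^{p+1})$ from \eqref{interpolation_error} (which is harmless since $\partial_t\CU^h$ is bounded in $L^1$ by the scheme itself and Assumption~\ref{assumtion_1}), and then use the semidiscrete equation to replace $\partial_t\CU^h$ by the (negative) discrete divergence plus interface terms.

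Next I would apply the SBP property to do integration by parts at the discrete level, yielding a volume term containing $\mathbf{f}^{vol}(\CU^{h,-},\CU^{h,+})\cdot\nabla_h(\Pi_h\varphi)$ and an interface term containing $(\mathbf{f}^{vol}-\mathbf{f})\cdot\bn$ and the numerical flux $\bfnum$. The volume term is split as $T_{11}+T_{12}$ as in the preceding discussion: $T_{12}$ reproduces (after replacing $\nabla_h\Pi_h\varphi$ by $\nabla_\bx\varphi$, with error $\Ol(h)$) the desired flux term $\int\bbf(\CU^h):\nabla_\bx\varphi$, while $T_{11}$ is controlled using consistency and Lipschitz continuity of $\mathbf{f}^{vol}$ together with the intra-element smoothness assumption \eqref{eq_assumption_2}, producing a bound $\Ol(h)$. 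For the interface contributions I would argue exactly as in \eqref{T_3_zero}: split each boundary factor $\Pi_h\varphi^-$ into $(\Pi_h\varphi^- -\varphi)+\varphi$; the first piece is $\Ol(h^{p+1})$ uniformly and the second piece either collapses by the divergence theorem to a boundary integral over $\partial\Omega$ (vanishing under periodic or no-flux conditions) or cancels across interfaces by conservativity of $\bfnum$.

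For the entropy inequality \eqref{eq:consistency_eta}, the starting point is the discrete entropy balance \eqref{eq_equa}: testing it against $\Pi_h\varphi$ with $\varphi\geq 0$ and repeating the SBP manipulations yields $\int_\Omega\eta^h\partial_t\varphi+(\eta^h\bu^h)\cdot\nabla_\bx\varphi \diff\bx$ up to (i) interpolation / quadrature errors treated as above and (ii) the interface entropy residual $r^h_\sigma=\delta_\sigma\jump{\bU^h}_\sigma\jump{\mathbf{W}^h}_\sigma$, which carries a favorable sign and whose $L^1$-in-time mass is controlled by the weak BV estimate \eqref{weak_BV_calc} already established, so it enters $e_{\eta^h}$ and tends to zero. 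The energy conservation $\int_\Omega E^h(\tau)\diff\bx=\int_\Omega E^h_0\diff\bx$ is obtained by choosing $\varphi\equiv 1$ in the energy component of the local identity \eqref{eq:conservation}: all numerical flux contributions cancel telescopically across interfaces by conservativity, and the boundary contributions vanish under periodic / no-flux conditions.

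Finally, I would collect all error contributions produced along the way—interpolation errors $\Ol(h^{p+1})$ from $\Pi_h$, gradient approximation errors $\Ol(h)$ from $\nabla_h\Pi_h\varphi\to\nabla_\bx\varphi$, Lipschitz-type volume errors $\Ol(h)$ bounded via \eqref{eq_assumption_2}, interface numerical flux errors $\Ol(h^p)$, and Gauss--Lobatto quadrature errors $\Ol(h^{2p-1})$—into $e_{j^h}(t,\varphi)$ and note that each of them is, uniformly in $t$, bounded by a constant (depending on $\|\varphi\|_{W^{p+1,\infty}}$ and on the uniform bounds from Assumption~\ref{assumtion_1}) times a positive power of $h$, so that $\|e_{j^h}\|_{L^1(0,T)}\to 0$ as $h\to 0$. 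The main obstacle I anticipate is the bookkeeping of the volume flux error $T_{11}$ for the entropy equation, because there Lipschitz continuity of $\mathbf{f}^{vol}$ must be combined with the smoothness assumption \eqref{eq_assumption_2} on $\CU^h$ (not just on $\bU^h$), and with the fact that $\eta^h$ is a nonlinear function of the conservative variables whose derivatives are controlled only through \eqref{eq:hessian} and Assumption~\ref{assumtion_1}; once those uniform bounds are invoked, the argument for $\rho^h$ and $\bm^h$ transfers to $\eta^h$ verbatim.
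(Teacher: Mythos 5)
Your proposal is correct and follows essentially the same route as the paper: the paper's ``proof'' of Theorem \ref{th_consistency} is precisely the development in the subsection on consistency errors (insert $\Pi_h\varphi$, use the SBP/integration-by-parts structure, split the volume term into $T_{11}+T_{12}$, control $T_{11}$ via \eqref{eq_assumption_2} and Lipschitz continuity, kill the interface terms by interpolation estimates, conservativity and the divergence theorem, and absorb the quadrature remainders), and your treatment of the entropy inequality via the signed interface residual together with the weak BV estimate, as well as the energy identity via $\varphi\equiv 1$ and telescoping of the conservative fluxes, matches what the paper relies on. No substantive deviation or gap.
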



\begin{remark}[Consistency using subcell shock capturing]\label{re_subcell}
The pure consistency estimation of the DG framework has been proven under  assumption \eqref{eq_assumption_2}. 
However, in practice,  this is not needed if we apply the entropy stable FV subcell shock capturing technique proposed in \cite{hennemann2021provably} for the splitform DG formulation \eqref{eq:Shu_modified_two_nodal} on Gauss-Lobatto nodes. Here, the basic idea is to mark problematic cells where the discontinuity   may live, divide the cell into subcells considering every degree of freedom separately and apply finally in each subcell the underlying finite volume method. In our case, this would be the local Lax-Friedrich method. For the region with smooth solutions (not marked), we obtain error estimations
for the DG method following the spirit of \cite{huang2017error, zhang2006error} where adequate exactness of the quadrature rules of the volume and surface terms have been assumed. In such regions, the consistency property \eqref{equ:consistent} is clearly fulfilled. In the problematic cells which are effected by numerical oscillations driven by the Gibbs phenomena, the subcell FV method is applied. We obtain the consistency estimation for this part by the consistency investigation  from \cite{feireisl2019convergence} for the local Lax-Friedrich method. It has to be stressed out that the procedure of marking the problematic cells have a connection to our additional assumption if the gradient of $\bU^h$ is too steep, the cell will be marked. Due to our additional requirement,  we do not have such steep gradients inside our calculation, see also Remark \eqref{ex_discussion} for further discussion. 
\end{remark}

\begin{remark}[Discussion on the additional assumption \eqref{eq_assumption_2}]\label{ex_discussion}
We note that only a few results on the error behavior of DG schemes (or general high-order methods) are known in case that a nonsmooth solution is approximated. 
The closed one which is related to our setting is the result by Yang and Shu in  \cite{yang2013discontinuous} where the authors  investigated the analytical DG method. They proved for the scalar one-dimensional equation that the error behavior is high-order in smooth regions and the region around the discontinuities scales with $\Ol(h^{\frac{1}{2}}\log\frac{1}{h})$ where $h$ denotes the length of the mesh size. 
\end{remark}

\begin{remark}[Alternative surface and volume fluxes]

 Instead of working with the local Lax-Friedrich flux for the surface, we can apply any entropy stable monotone flux like 
Godunov flux for example. Following the analysis, in the FV framework from the literature \cite{lukavcova2021convergence}, we obtain similar estimations for those terms. 
Also, the volume flux can be changed to other entropy conservative fluxes similar results are obtained. 
\end{remark}

\section{Convergence to Dissipative Weak Solutions} \label{se:convergence}

In the following, we demonstrate the 
convergence of the entropy-stable DG method \eqref{eq:Shu_modified_two_nodal}.
 Due to our result from Section \ref{se:Consistency}, $\CU^h$  is a consistent 
approximation of the complete Euler system. In the following, we demonstrate the weak convergence theorem:

\begin{theorem}[Weak convergence]\label{eq:theorem_weak}
Let $\CU^h=\{\rho^h, \bm^h, \eta^h \}_{h\to 0}$ be a family of numerical solutions generated by the DG scheme \eqref{eq:Shu_modified_two_nodal} using the  chosen numerical fluxes as suggested in Section \ref{se:Consistency}.  Let assumptions \ref{assumtion_1} and  \eqref{eq_assumption_2} hold. Then, there exists a subsequence $\CU^h$ (denoted again by $\CU^h$) such that 
\begin{equation}\label{eq_bounds}
\begin{aligned}
\rho^h &\to \rho \text{ weakly-(*) in } L^{\infty}((0,T)\times \Omega)\\
\eta^h &\to \eta \text{ weakly-(*) in } L^{\infty}((0,T)\times \Omega)\\
\bm^h &\to \bm \text{ weakly-(*) in } L^{\infty}((0,T)\times \Omega; \R^2))\\
\end{aligned}
\end{equation}
as $h\to 0$, where $(\rho, \bm, \eta)$ is a DW solution of the complete Euler system \eqref{eq_Euler_conservation}. 
In addition, $\rho\geq \underline{\rho} >0$ and $\eta \leq  \underline{\rho s}$ a.a. in $(0,T)\times \Omega$.
Moreover, 
$E(\CU^h) \to \mean{E(\CU)}$ weakly-(*) in $L^{\infty}(0,T; \mathcal{M}(\overline{\Omega}))$ and the energy defect measure $\mathfrak{E}$ is a sum of the energy concentration defect $\mathfrak{E}_{cd}$ and the energy oscillation defect $\mathfrak{E}_{od}$. More precisely,  it holds
\begin{align*}
\mathfrak{E}_{cd} \equiv \overline{E(\rho, \bm, \eta)}- \est{\nu; E(\tilde{\rho}, \tilde{\bm}, \tilde{\eta})} =0,  \quad 
\mathfrak{E}_{od} \equiv  \est{\nu; E(\tilde{\rho}, \tilde{\bm}, \tilde{\eta})} - E( \rho, \bm, \eta) \geq 0. 
\end{align*}
Further, the momentum flux converges weakly-(*) in $L^{\infty}(0,T; \mathcal{M}(\overline{\Omega}; \R^{2\times2}_{sym}))$, i.e. 
\begin{equation*}
\left( \frac{\bm^h \otimes \bm^h}{\rho^h} +p(\rho^h, \eta^h) \mathbb{I}\right) \to \overline{ \frac{\bm \otimes \bm}{\rho} +p(\rho, \eta) \mathbb{I} }.
\end{equation*}
The Reynolds defect $\mathfrak{R}$ is a sum of the concentration defect $\mathfrak{R}_{cd}$ and the oscillation 
defect $\mathfrak{R}_{od}$.
We have 
\begin{align*}
\mathfrak{R}_{cd} \equiv& \overline{ \frac{\bm \otimes \bm}{\rho} +p(\rho, \eta) \mathbb{I} } -  \est{\nu ; \frac{\tilde{\bm} \otimes \tilde{\bm}}{\tilde{\rho}} +p(\tilde{\rho}, \tilde{\eta}) \mathbb{I} } =0, \\
\mathfrak{R}_{od}\equiv&  \est{\nu ; \frac{\tilde{\bm} \otimes \tilde{\bm}}{\tilde{\rho}} +p(\tilde{\rho}, \tilde{\eta}) \mathbb{I} } -\left( \frac{\bm \otimes \bm }{\rho} +p(\rho, \eta) \mathbb{I} \right).
\end{align*}
Specifically, the Reynolds defect is controlled by the energy defect in the following way
\begin{equation*}
c_1 \mathfrak{E} \leq tr[\mathfrak{R} ]\leq c_2 \mathfrak{E}
\end{equation*}
for certain constants $0<c_1 \leq c_2. $
\end{theorem}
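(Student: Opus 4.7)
The plan is to extract weakly-$(*)$ convergent subsequences from the uniformly bounded family $\CU^h=(\rho^h,\bm^h,\eta^h)$, identify the weak limits of the nonlinear composition terms via Young measures, and then pass to the limit in the consistency identities \eqref{eq:consistency_rho}--\eqref{eq:consistency_eta} obtained in Theorem~\ref{th_consistency}. First I would use Assumption~\ref{assumtion_1} together with the bounds on $|\bu^h|$, $p^h$ and $\eta^h$ derived from it (and from the entropy inequality) to conclude that $\rho^h$, $\bm^h$, $E^h$, $\eta^h$ are uniformly bounded in $L^\infty((0,T)\times\Omega)$. By Banach--Alaoglu, up to a subsequence we obtain the weak-$(*)$ limits \eqref{eq_bounds}. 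The fundamental theorem on Young measures then furnishes a parametrized probability measure $\{\nu_{t,\bx}\}$ on the phase space $\mathcal{F}=\{\tilde\rho,\tilde\bm,\tilde\eta\}$ such that, for any continuous $G$ with at most the admissible growth, $G(\CU^h)\to\overline{G}\equiv\langle\nu;G(\tilde\rho,\tilde\bm,\tilde\eta)\rangle$ weakly-$(*)$. In particular the barycentric identities $\langle\nu,\tilde\rho\rangle=\rho$, $\langle\nu,\tilde\bm\rangle=\bm$, $\langle\nu,\tilde\eta\rangle=\eta$ hold, and the support constraint $\nu_{t,\bx}\{\tilde\rho\geq 0,\ (1-\gamma)\tilde\eta\geq \underline{s}\tilde\rho\}=1$ is inherited from the pointwise bounds on $\CU^h$.

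Next I would identify the defect measures. Since $\rho^h\geq\underline{\rho}$, $\rho^h\leq\overline{\rho}$, and $E^h\leq\overline{E}$ uniformly, the nonlinearities $E(\rho^h,\bm^h,\eta^h)$ and $\tfrac{\bm^h\otimes\bm^h}{\rho^h}+p(\rho^h,\eta^h)\mathbb{I}$ are continuous and bounded on the (compact) range of $\CU^h$, hence their weak-$(*)$ limits coincide with the Young-measure averages $\overline{E(\rho,\bm,\eta)}=\langle\nu;E\rangle$ and $\overline{\tfrac{\bm\otimes\bm}{\rho}+p\mathbb{I}}=\langle\nu;\tfrac{\tilde\bm\otimes\tilde\bm}{\tilde\rho}+p(\tilde\rho,\tilde\eta)\mathbb{I}\rangle$. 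This is precisely what makes the concentration defects vanish: $\mathfrak{E}_{cd}=0$ and $\mathfrak{R}_{cd}=0$. The oscillation defects are then
\[
\mathfrak{E}_{od}=\langle\nu;E(\tilde\rho,\tilde\bm,\tilde\eta)\rangle-E(\rho,\bm,\eta)\geq 0,
\]
\[
\mathfrak{R}_{od}=\Big\langle\nu;\tfrac{\tilde\bm\otimes\tilde\bm}{\tilde\rho}+p\mathbb{I}\Big\rangle-\Big(\tfrac{\bm\otimes\bm}{\rho}+p(\rho,\eta)\mathbb{I}\Big),
\]
and I would verify nonnegativity of $\mathfrak{E}_{od}$ and the positive semi-definiteness of $\mathfrak{R}_{od}$ via Jensen's inequality, using the convexity of $E$ in $(\rho,\bm,\eta)$ (this is the strict convexity coming from the Hessian \eqref{eq:Hessian}) and the convexity of $(\rho,\bm)\mapsto\tfrac{\bm\otimes\bm}{\rho}$.

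The third step is to pass to the limit in the consistency identities \eqref{eq:consistency_rho}--\eqref{eq:consistency_eta}. For the continuity equation this is direct because both $\rho^h\partial_t\varphi$ and $\bm^h\cdot\nabla_\bx\varphi$ are linear in the unknowns and $e_{\rho^h}\to 0$ in $L^1(0,T)$. For the momentum equation I would write the nonlinear flux as $\tfrac{\bm^h\otimes\bm^h}{\rho^h}+p^h\mathbb{I}=\langle\nu;\ldots\rangle+\bigl((\ldots)^h-\overline{(\ldots)}\bigr)$; the first piece yields the ``clean'' DW momentum identity with $\langle\nu;\tfrac{\tilde\bm\otimes\tilde\bm}{\tilde\rho}+p\mathbb{I}\rangle$, while the discrepancy is by definition $\mathfrak{R}_{od}=\mathfrak{R}$ and provides the $\int\nabla_\bx\varphi:\dd\mathfrak{R}$ term in Definition~\ref{def_dmv}. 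For the entropy inequality a similar splitting yields the Young-measure flux $\langle\nu;\mathbf{1}_{\tilde\rho>0}\tilde\eta\tilde\bm/\tilde\rho\rangle$, and the sign is preserved since the consistency estimate \eqref{eq:consistency_eta} is an inequality. The energy equality $\int E^h(\tau)\dd\bx=\int E^h_0\dd\bx$ combined with the weak convergence produces the required energy inequality once the oscillation defect $\mathfrak{E}_{od}\geq 0$ is absorbed on the left-hand side.

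Finally, I would establish the defect compatibility $c_1\mathfrak{E}\leq\operatorname{tr}[\mathfrak{R}]\leq c_2\mathfrak{E}$. Since $\mathfrak{E}=\mathfrak{E}_{od}$ and $\mathfrak{R}=\mathfrak{R}_{od}$ share the same Young-measure Jensen gap, a direct pointwise inequality of the form
\[
c_1\Bigl(\tfrac{1}{2}\tfrac{|\tilde\bm|^2}{\tilde\rho}+\tilde\rho e(\tilde\rho,\tilde\eta)\Bigr)\leq \tfrac{|\tilde\bm|^2}{\tilde\rho}+2p(\tilde\rho,\tilde\eta)\leq c_2\Bigl(\tfrac{1}{2}\tfrac{|\tilde\bm|^2}{\tilde\rho}+\tilde\rho e(\tilde\rho,\tilde\eta)\Bigr),
\]
valid on the support of $\nu$ by the uniform bounds, transfers under integration against $\nu$ into the desired inequality between defects. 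I expect the main technical obstacle to be the precise justification that \emph{no concentration} occurs in the nonlinear fluxes, i.e., that the weak-$(*)$ limit really is the Young-measure barycenter and not a larger measure with a singular part; this is where the strict positivity of $\rho^h$ and the uniform upper bound on $E^h$ from Assumption~\ref{assumtion_1} are crucial, since together with the weak BV estimate they prevent both vacuum-type and blow-up-type concentrations.
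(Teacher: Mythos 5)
Your plan follows essentially the same route as the paper: uniform $L^\infty$ bounds from Assumption~\ref{assumtion_1}, extraction of a Young measure via the fundamental theorem, identification of the weak-$(*)$ limits of the nonlinear fluxes with the Young-measure barycenters (hence vanishing concentration defects), and passage to the limit in the consistency identities of Theorem~\ref{th_consistency}. In fact you supply more detail than the paper on the defect analysis, which the authors outsource entirely to \cite[Theorem 10.4]{feireisl2021numerics}.

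There is, however, one step that does not work as written: the defect compatibility $c_1\mathfrak{E}\leq\operatorname{tr}[\mathfrak{R}]\leq c_2\mathfrak{E}$. You propose a pointwise two-sided bound $c_1 E(\tilde\rho,\tilde\bm,\tilde\eta)\leq\operatorname{tr}\bigl[\tfrac{\tilde\bm\otimes\tilde\bm}{\tilde\rho}+p\mathbb{I}\bigr]\leq c_2 E(\tilde\rho,\tilde\bm,\tilde\eta)$ on the support of $\nu$ and claim it "transfers under integration against $\nu$" to the defects. But the defects are Jensen gaps, i.e.\ differences of the form $\est{\nu;G}-G(\text{barycenter})$, and a two-sided bound $c_1F\leq G\leq c_2F$ between nonnegative functions does \emph{not} imply $c_1(\est{\nu;F}-F)\leq\est{\nu;G}-G\leq c_2(\est{\nu;F}-F)$: the lower bound would require $G(\text{barycenter})\leq c_1F(\text{barycenter})$, which contradicts the very inequality you assumed. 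The correct argument exploits the algebraic identity (in $d$ dimensions)
\begin{equation*}
\operatorname{tr}\Bigl[\tfrac{\tilde\bm\otimes\tilde\bm}{\tilde\rho}+p(\tilde\rho,\tilde\eta)\mathbb{I}\Bigr]
=2\cdot\tfrac12\tfrac{|\tilde\bm|^2}{\tilde\rho}+d(\gamma-1)\,\tilde\rho\, e(\tilde\rho,\tilde\eta),
\end{equation*}
so that the trace is a \emph{positive linear combination} of the same two convex building blocks (kinetic and internal energy) whose unit-coefficient sum is $E$. Each block has its own nonnegative Jensen gap, and taking $c_1=\min\{2,d(\gamma-1)\}$, $c_2=\max\{2,d(\gamma-1)\}$ then yields the two-sided control of $\operatorname{tr}[\mathfrak{R}_{od}]$ by $\mathfrak{E}_{od}$ gap-by-gap. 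With this repair (and noting that the paper additionally records the weak-continuity classes $C_{weak}([0,T];L^\gamma)$ etc.\ for the barycenter coordinates, which your sketch omits but which follows from the same consistency identities), your argument matches the intended proof.
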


\begin{proof}
 Under assumption \ref{assumtion_1} we obtain that 
 \begin{align*}
  \rho^{h} \in  L^{\infty}((0,T)\times \Omega), \; \bm^h \in  L^{\infty}((0,T)\times \Omega),\; \eta^h\in  L^{\infty}((0,T)\times \Omega),\;
  E^h \in  L^{\infty}((0,T)\times \Omega),\\
  \frac{\bm^h \otimes \bm^h}{\rho^h} \in  L^{\infty}((0,T)\times \Omega), \;p^h \in  L^{\infty}((0,T)\times \Omega),\; \mathbf{g}^h=\bu^h\eta^h \in  L^{\infty}((0,T)\times \Omega). 
 \end{align*}
In accordance with the fundamental theorem on  Young measures (cf. \cite{ball1989version}),
there exists a convergent subsequence and a paramtrized probability measure $\{ \nu_{t,\bx} \}_{(t,\bx)\in (0,T)\times \Omega } $ (in the context of \eqref{eq:Young}) ensuring that $\CU^h=(\rho^h,\bm^h, \eta^h)$ converges weakly-(*)  to 
$\est{\nu_{t,\bx},\tilde{\rho}},\;\est{\nu_{t,\bx},\tilde{\bm}},\; \est{\nu_{t,\bx},\tilde{\eta}}$ in $ L^{\infty}((0,T)\times \Omega)$. Moreover, also 
 $E^h \in  L^{\infty}((0,T)\times \Omega),\;
  \frac{\bm^h \otimes \bm^h}{\rho^h} , \;p^h,\; \mathbf{g}^h$  converges  weakly-(*)
  in the following sense in $L^\infty((0,T)\times \Omega)$:
\begin{equation*}
 \begin{aligned}
  E(\rho^h, \bm^h, \eta^h)&\to  \est{\nu; E(\tilde{\rho}, \tilde{\bm}, \tilde{\eta})}, \\
  \left( \frac{\bm^h \otimes \bm^h}{\rho^h} +p(\rho^h, \eta^h) \mathbb{I}\right) &\to \est{\nu ; \frac{\tilde{\bm} \otimes \tilde{\bm}}{\tilde{\rho}} +p(\tilde{\rho}, \tilde{\eta}) \mathbb{I} }.\\
 \end{aligned}
\end{equation*}
Next, we are passing to the limit $h\to 0$ in the consistency formulation from Theorem \ref{th_consistency}.
We get with \eqref{eq:consistency_rho}
\begin{equation*}
  \left[  \int_{\Omega} \est{\nu_{t,\bx}; \tilde{\rho}} \varphi \diff \bx \right]_{t=0}^{t=\tau} =\int_0^\tau \int_{\Omega} \est{\nu_{t,\bx}; \tilde{\rho}} \partial_t \varphi +\est{\nu_{t,\bx}; \tilde{\bm}}\cdot \nabla_{\bx} \varphi \diff \bx \diff t,
\end{equation*}
where  $\varphi \in C^\infty_c((0,T)\times \Omega)$. Analogously, we obtain similar results for
the momentum from \eqref{eq:consistency_m}
\begin{equation*}
  \left[  \int_{\Omega} \est{\nu_{t,\bx}; \tilde{\bm}} \varphi \diff \bx \right]_{t=0}^{t=\tau} =\int_0^\tau \int_{\Omega} \est{\nu_{t,\bx}; \tilde{\bm}} \partial_t \varphi +\est{\nu_{t,\bx}; \frac{\tilde{\bm}\otimes \tilde{\bm}}{\tilde{\rho}}}: \nabla_{\bx} \varphi \diff \bx \diff t
  +\int_0^\tau \int_{\Omega} \est{\nu_{t,\bx}; \tilde{p}} \div_{\bx} \varphi  \diff \bx \diff t,
\end{equation*}
for the entropy 
\begin{equation*}
\left[\int_{\Omega} \est{\nu_{t,\bx}; \tilde{\eta}}\varphi  \diff \bx \right]_{t=\tau_1-}^{t=\tau_2+} 
\leq  \int_{\tau_1}^{\tau_2} \int_{\Omega} \left[ 
\est{\nu_{t,\bx}; \tilde{\eta}} \partial_t \varphi +\est{\nu_{t,\bx}; \tilde{\eta}\tilde{\bm}} \cdot \nabla_\bx \varphi \right]  \diff \bx \diff t
\end{equation*}
 respectively (with suitable test functions $\varphi$ as specified in \eqref{eq:consistency_m}, \eqref{eq:consistency_eta}). 
With Theorem \ref{th_consistency}, we  get
  $$ \int_{\Omega} \est{\nu_{\tau,\bx}; E(\tilde{\rho},\tilde{\bm},\tilde{\eta})} \diff \bx =\int_{\Omega} E(\rho_0,\bm_0,\eta_0)\diff \bx.$$
This concludes that $\nu_{t,\bx}$ is a DMS satisfying \eqref{eq:Young}. Due to the minimum entropy principle and assumption \ref{assumtion_1}, we get further
\begin{equation*}
 \nu_{t, \bx}\left\{0<\underline{\rho}\leq \tilde{\rho}\leq\overline{\rho};\; \underline{s} \tilde{\rho} \leq (1-\gamma) \tilde{\eta} \leq \overline{s} \tilde{\rho} \right\}=1 \text{ for a. a. } (t,\bx) \in (0,T)\times \Omega; 
\end{equation*}

By identifying the first two coordinates of the barycenter of the Young measure with 
$
 \est{\nu; \tilde{\rho}}= \rho;\; \est{\nu; \tilde{\bm}}= \bm,
$
we have $\rho \in C_{weak}([0,T];L^{\gamma}(\Omega))$ and $\bm \in C_{weak}([0,T];L^{\frac{2\gamma}{\gamma+1}}(\Omega;\R^2) $. For  more details, cf. \cite[Section 2]{feireisl2021numerics} while $\eta=\est{\nu, \tilde{\eta}}$ satisfies 
$
  L^{\infty}(0, T; L^{\gamma}(\Omega)) \cap BV_{weak}([0,T];L^{\gamma}(\Omega)).
$
This show 
\eqref{eq_weak_space}.
From the limit process through the consistency properties of the entropy stable DG scheme  \eqref{eq:Shu_modified_two_nodal}  and the uniform bounds, we obtain that $(\rho, \bm, \eta)$ is a DW solution of the complete Euler system  as defined in Definition \ref{def_dmv}.
Following analogous steps as in \cite[Theorem 10.4]{feireisl2021numerics}, we 
obtain that the oscillation energy defect is positiv and the Reynolds defect can be controlled be the energy defect as describe in \cite[Theorem 10.4]{feireisl2021numerics}. 
\end{proof}

The weak convergence of the flux differencing method as described in Theorem \ref{eq:theorem_weak} is not very suitable in numerical simulations. Therefore, it is more convenient
to apply $\mathcal{K}$-convergence that provides strong convergence of the Cesaro averages  
to a DW solution as well as strong convergence of the approximate deviation of the associated Young measures, cf.  \cite[Theorem 10.5]{feireisl2021numerics}. 
In particular, we have \textbf{strong convergence of Cesaro averages} $\CU^{h_n}=(\rho^{h_n},\bm^{h_n},\eta^{h_n})$ meaning
\begin{equation*}
\frac{1}{N} \sum_{n=1}^N \CU^{h_n} \to \CU \text{ as } N\to \infty \text{ in } L^q ((0,T)\times \Omega, \R^4) \text{ for any } 1\leq q<\infty.
\end{equation*}
Under some additional assumptions as specified in Theorem \ref{th_convergence}, we can obtain strong convergence of the sequence of approximated solutions. 
Here, we can adapt again the proof  \cite[Theorem 10.6]{feireisl2021numerics} to our flux differencing scheme.
\begin{theorem}[Strong Convergence of the DG scheme]\label{th_convergence}
Let $\CU^h=\{ \rho^h, \bm^h, \eta^h \}_{h\to 0}$ be numerical solutions of DG method   \eqref{eq:Shu_modified_two_nodal}  with the initial data
$\rho^h_0, \bm_0^h$ and $\eta_0^h, \rho_0 \geq \underline{\rho}>0, (1-\gamma)\eta_0 \geq \underline{\rho s}$.
 Further, let assumptions \ref{assumtion_1} and  \eqref{eq_assumption_2} hold. 
Let $\CU^h=(\rho^h, \bm^h,\eta^h) \to (\rho, \bm,\eta) $ as $h\to 0$ 
in the sense specified in Theorem \ref{eq:theorem_weak}. Then, the following holds:
\begin{itemize}
\item \textbf{weak solution}: \\
If $\CU=[\rho, \bm, \eta]$ is a weak entropy solution and emanating from the initial data $\CU_0$, then 
$\nu_{t,\bx}=\delta_{\CU(t,\bx)} $ for a.a. $(t,\bx) \in (0,T) \times \Omega$, and 
\begin{align*}
(\rho^{h},\bm^h, \eta^h) &\to (\rho,\bm, \eta) \text{ in } L^q((0,T)\times \Omega;\R^4)\\
E(\CU^h)=\frac12 \frac{|\bm^{h}|^2}{\rho^{h}} + \rho^{h} e(\rho^{h}, \eta^{h}) &\to \frac12 \frac{|\bm|^2}{\rho} + \rho e(\rho, \eta)  \text{ in } L^q((0,T)\times \Omega)
\end{align*}
for any $1 \leq q <\infty$
\item \textbf{strong solution:}\\
Suppose that the Euler system admits a strong solution $\CU$ in the class 
$\rho, \eta \in W^{1,\infty}((0,T) \times \Omega), \bm \in W^{1,\infty}((0,T)\times \Omega; \R^2)$,
$\rho \geq \underline{\rho}>0$ in $[0,T]\times \Omega$  emanating from the initial data $\CU_0$. 
Then, for any $1\leq  q<\infty$ and $h\to 0$ 
\begin{align*}
(\rho^{h},\bm^h, \eta^h) &\to (\rho,\bm, \eta) \text{ in } L^q((0,T)\times \Omega; \R^4)\\
E(\CU^h)&\to E(\CU)  \text{ in } L^q((0,T)\times \Omega)
\end{align*}

\item \textbf{classical solutions:}\\
Let $\Omega\in \R^d$ be a bounded Lipschitz domain and
 $\rho\in C^1([0,T] \times \overline{\Omega})$, $\rho\geq \overline{\rho}>0,\; \bm\in C^1([0,T]\times \overline{\Omega};\R^2),\; \eta\in C^1 ([0,T] \times \overline{\Omega})$. Then $\CU=(\rho, \bm, \eta)$ is a classical solution to the Euler system and 
\begin{align*}
(\rho^{h},\bm^h, \eta^h) &\to (\rho,\bm, \eta) \text{ in } L^q((0,T)\times \Omega,\R^4)\\
\end{align*}
as $h\to 0$, for any $q\geq1$. 
\end{itemize}
\end{theorem}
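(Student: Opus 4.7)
The plan is to leverage the weak-$(*)$ convergence already established in Theorem \ref{eq:theorem_weak} together with a weak--strong uniqueness principle for dissipative weak (DW) solutions. The starting observation is that from Theorem \ref{eq:theorem_weak}, up to a subsequence, $\CU^h \to \CU = (\rho, \bm, \eta)$ weakly-$(*)$ in $L^{\infty}$ and the limit is a DW solution of the complete Euler system with associated Young measure $\{\nu_{t,\bx}\}$, energy defect $\mathfrak{E}$ and Reynolds defect $\mathfrak{R}$. Strong convergence will follow once I can show that (i) the Young measure is a Dirac mass concentrated on $\CU$ and (ii) both defect measures vanish; indeed these two properties force the weak limits of the second moments $\est{\nu;|\tilde{\CU}|^2}$ to coincide with $|\CU|^2$, yielding convergence of $L^2$ norms, and together with weak $L^2$ convergence this upgrades to strong $L^2$ convergence. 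The uniform $L^{\infty}$ bounds from Assumption \ref{assumtion_1} and the interpolation inequality $\|f\|_{L^q}\leq \|f\|_{L^{\infty}}^{1-2/q}\|f\|_{L^2}^{2/q}$ then give strong convergence in every $L^q$ with $1\leq q<\infty$.

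For the classical solution case, the compatibility statement recalled after Definition \ref{def_dmv} (cf. the $C^1$ regularity class \eqref{eq_regularity}) asserts that a DW solution of this smoothness is automatically a classical solution, so the $C^1$ function $\CU$ is itself a DW solution with trivial defects. Applying the weak--strong uniqueness principle for DW solutions (see \cite{feireisl2021numerics}) to the two DW solutions built from the same initial data $\CU_0$, one deduces $\nu_{t,\bx}=\delta_{\CU(t,\bx)}$ a.e.\ and $\mathfrak{E} = \mathfrak{R} = 0$. The strong solution case is handled identically, except that the weak--strong uniqueness is invoked at Lipschitz regularity, which is also available in the DW framework. For the weak entropy solution case, I would first note that any admissible weak entropy solution with the assumed $L^\infty$ bounds defines a DW solution with zero defects (the Young measure being a Dirac), so again weak--strong uniqueness applied against our limit DW solution identifies the Young measure with $\delta_{\CU}$ and forces $\mathfrak{E}=\mathfrak{R}=0$.

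Once Dirac collapse and vanishing defects are in hand, the strong convergence statements follow mechanically: the identity $\mathfrak{E}_{od}\equiv \est{\nu; E(\tilde{\rho},\tilde{\bm},\tilde{\eta})} - E(\rho,\bm,\eta) = 0$ means $E(\CU^h)\to E(\CU)$ in the sense of weak-$(*)$ convergence of measures but with matching masses, which combined with the Dirac structure of $\nu$ gives $E(\CU^h)\to E(\CU)$ in $L^1$ and, by $L^\infty$ bounds, in every $L^q$; analogously for each component $\rho^h, \bm^h, \eta^h$ using that the corresponding second moments of $\nu$ collapse to squares of the barycenters. The main technical obstacle in the plan is justifying the weak--strong uniqueness step at the lowest regularity available (weak entropy solutions in the first bullet), but this is a reduction to the uniqueness result already proved for DW solutions of the complete Euler system in the cited references; our contribution is only to feed the DG-generated DW solution into that machinery, which the consistency Theorem \ref{th_consistency} and the weak BV estimate from Section \ref{se:flux_differencing} have set up.
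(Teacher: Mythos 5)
Your handling of the strong and classical cases coincides with the paper's: both rest on the weak--strong uniqueness principle of \cite{feireisl2021numerics} (respectively the compatibility property for $C^1$ limits), and your mechanism for upgrading weak-$(*)$ convergence to strong $L^q$ convergence --- Dirac collapse plus vanishing defects gives convergence of second moments, hence of $L^2$ norms, hence strong $L^2$ convergence, and interpolation against the uniform $L^\infty$ bounds of Assumption \ref{assumtion_1} covers every $q<\infty$ --- is a legitimate unpacking of what the paper compresses into ``in view of a priori bounds \dots we obtain the strong convergence.''

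The genuine gap is in the first bullet. You propose to identify the limit DW solution with the weak entropy solution $\CU$ by applying weak--strong uniqueness ``against our limit DW solution,'' and you acknowledge that justifying this at weak-entropy regularity is the main obstacle, asserting that it reduces to the uniqueness results already proved for DW solutions in the cited references. It does not: the weak--strong uniqueness principle there requires the comparison solution to be Lipschitz or $C^1$, and no analogous ``DW--weak'' uniqueness can hold, precisely because weak entropy solutions of the multidimensional Euler system are not unique (De~Lellis--Sz\'ekelyhidi \cite{de2010admissibility}, recalled in Section~2 of this paper); a uniqueness argument at this regularity level is therefore unavailable in principle, not merely unproved. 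The paper's proof of this case uses no uniqueness statement at all. Instead, the conservation of total energy by the scheme, $\int_{\Omega} E^h(\tau)\,\diff \bx = \int_{\Omega} E^h_0\,\diff \bx$ from Theorem \ref{th_consistency}, passes to the limit and, combined with the energy inequality satisfied by the DW solution, forces the energy defects $\mathfrak{E}_{cd}$ and $\mathfrak{E}_{od}$ to vanish; then, since the total energy is strictly convex in the conservative-entropy variables, the \emph{sharp form of the Jensen inequality} (\cite[Lemma 7.1]{feireisl2021numerics}) applied to $\mathfrak{E}_{od}=\est{\nu;E(\tilde{\rho},\tilde{\bm},\tilde{\eta})}-E(\rho,\bm,\eta)=0$ yields $\nu_{t,\bx}=\delta_{\CU(t,\bx)}$ for a.a.\ $(t,\bx)$. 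You need to replace your uniqueness step for the weak-solution bullet by this convexity argument; once the Dirac collapse is obtained that way, the rest of your plan goes through unchanged.
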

\begin{proof}
[Sketch of the proof]
The defects  $\mathfrak{E}_{cd}$ and 
$\mathfrak{E}_{od}$ vanishes and the strong convergence of $E(\CU^{h_n}) \to E(\CU)$ in $L^q(0,T;L^1(\Omega))$ follows. Due to the sharp form of the Jensen inequality, cf. \cite[Lemma 7.1]{feireisl2021numerics}, we conclude that 
$\nu_{t,\bx}=\delta_{\CU} $ for a.a. $(t,\bx) \in (0,T)\times \Omega $  and in view of a priori bounds for our numerical solutions, we obtain the strong convergence of $\CU^{h}$ to a weak solution $\CU$. 
If the strong solution to the Euler system exists, we apply the weak-strong uniqueness principle, cf.  \cite[Theorem 6.2]{feireisl2021numerics}. 
Consequently, we have $\nu_{t,\bx}= \delta_{\CU},\; \mathfrak{R}=0 $, and $\CU$ is a strong solution. 
As the limit is unique, the whole sequence $\CU^h$ converges  strongly to the strong solution. \\
The last statement follows from the weak-strong uniqueness principle or directly from the compatibility property due to the regularity conditions of the classical solutions. 
\end{proof}

\begin{remark}[Extensions to other high-order schemes]\label{re_others}
Due to the close connection between our DG scheme to SBP-FD discretizations \cite{gassner2013skew}, the above considerations should also hold for FD schemes based on SBP techniques.
Besides this, other high-order FE discretizations can be applied as starting schemes. 
Here, we like to mention the general residual distribution (RD) framework including continuous Galerkin, SUPG and FR schemes \cite{abgrall2012review,abgrall2019reinterpretation} and the  invariant domain preserving
schemes \cite{guermond2018second, kuzmin2020monolithic}. An essential property is the consistency of a numerical scheme including entropy inequality.  
\end{remark}

\section{Numerical Simulations}\label{eq:numerics}

In this section, we focus on the Kelvin-Helmholtz problem to illustrate the weak, strong and $\mathcal{K}$-convergence of the DG method \eqref{eq:Shu_modified_two_nodal}. 
The domain is chosen to be $[0,1]\times[0,1]$ divided into $n\times n$ uniform quads. Denote the Cesaro average of the numerical solution 
$
 \tilde{\bU}^{h_n}=\frac{1}{n} \sum_{j=1}^n \bU^{h_j}. 
$
Let $\bU^{h_N}$ be the reference solution computed on the finest mesh with $N\times N$ elements. We compute two errors
\begin{equation*}
 E_1 =\norm{\bU^{h_n}-\bU^{h_N} }, \qquad E_2= \norm{\tilde{\bU}^{h_n} -\tilde{\bU}^{h_N}}.
\end{equation*}
$E_1$ gives the classical error behavior, whereas $E_2$ takes the Cesaro averages into account. 
The numerical test, we are considering is the famous Kelvin Helmholtz (KH)  problem \cite{kelvin1871hydrokinetic, holzholtz1868}. 
KH describes a shear flow of three fluid layers with different densities. 
The initial data are given by 
\begin{equation}\label{init_KH}
 (\rho, u, v, p)(\bx, 0)=\begin{cases}
                          (2,-0.5,0,2.5), \quad I_1 \leq y \leq I_2,\\
                          (1,0.5,0,2.5),  \quad \text{ otherwise 0},
                         \end{cases}
\end{equation}
where the interface profiles $I_j=I_j(\bx):= J_j+\epsilon Y_j(\bx),\; j=1,2$ are chosen to be small perturbations around the lower $J_1=0.25$ and the upper $J_2=0.75$ interfaces, respectively. Moreover, 
$$Y_j= \sum_{m=1}^M a_j^m \cos \left( b_j^m +2\pi m x \right), \quad j=1,2$$ with $a_j^m\in [0,1]$ and $b_j^m\in[-\pi, \pi],\; j=1,2,\; m=1,\cdots, M$ are arbitrary but fixed numbers. The coefficients $a_j^m$ have been normalized such that $\sum_{m=1}^M a_j^m=1$ to guarantee  that $|I_j-J_j|\leq \epsilon$ for $j=1,2.$ In the simulations, we have $M=10$, $\epsilon=0.01$ and $T=2$. We select for $\gamma=1.4$ and solve the Euler equation in conservative variables \eqref{eq_Euler_conservation} using our DG method \eqref{eq:Shu_modified_two_nodal}. For the volume flux, we select either Ranocha's flux \eqref{eq:Ranocha_flux}  or Chandrashekar's flux. The surface numerical flux is always the local Lax-Friedrich flux. To ensure the positivity of density and pressure, we apply always the limiters  by Shu and Zhang on every stage of the strong stability preserving Runge-Kutta (SSPRK) and set the lower bounds to $10^{-6}$.
As demonstrated in Theorem \ref{the_limiters}, the limiter does not increase the entropy and the DGSEM \eqref{eq:Shu_modified_two_nodal} is semidiscrete entropy dissipative. With the used SSPRK(10,4), we have not realized any violation of the entropy inequality in the fully discrete setting. All the implementations are done using the Trixi framework \cite{ranocha2021adaptive, schlottkelakemper2020trixi, schlottkelakemper2021purely}. Trixi is a powerful numerical simulation framework for hyperbolic conservation laws written in Julia and includes  all the above mentioned features inside. The time step size is also controlled by the Trixi internal time step control which calculates the maximum $\Delta t$ after each time step. The CFL constant is set to $1.3$. The experimental convergence study is done for the density, momentum and entropy. In the first part, we investigate everything without FV shock capturing. Afterwards, we include also the application of FV shock capturing as described in \cite{hennemann2021provably}. We will see that we obtain similar results. In Figure \ref{fig:init_approx2}, we plot the initial condition and the approximated solutions using different mesh sizes. 
\begin{figure}[tb]
	\centering 
	\begin{subfigure}[b]{0.32\textwidth}
		\includegraphics[width=\textwidth]{%
      		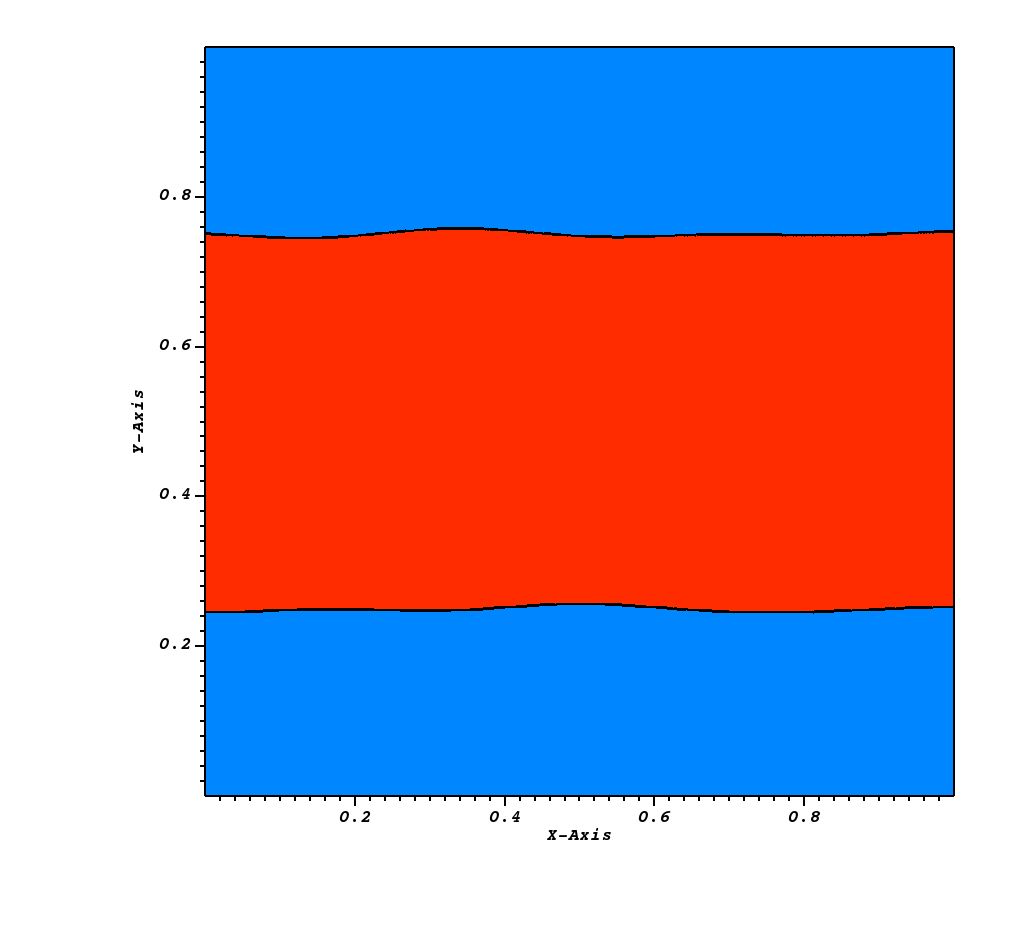} 
    		\caption{Initial Condition}
    		\label{fig:init_solution2}
  	\end{subfigure}%
	~
  	\begin{subfigure}[b]{0.32\textwidth}
		\includegraphics[width=\textwidth]{%
      		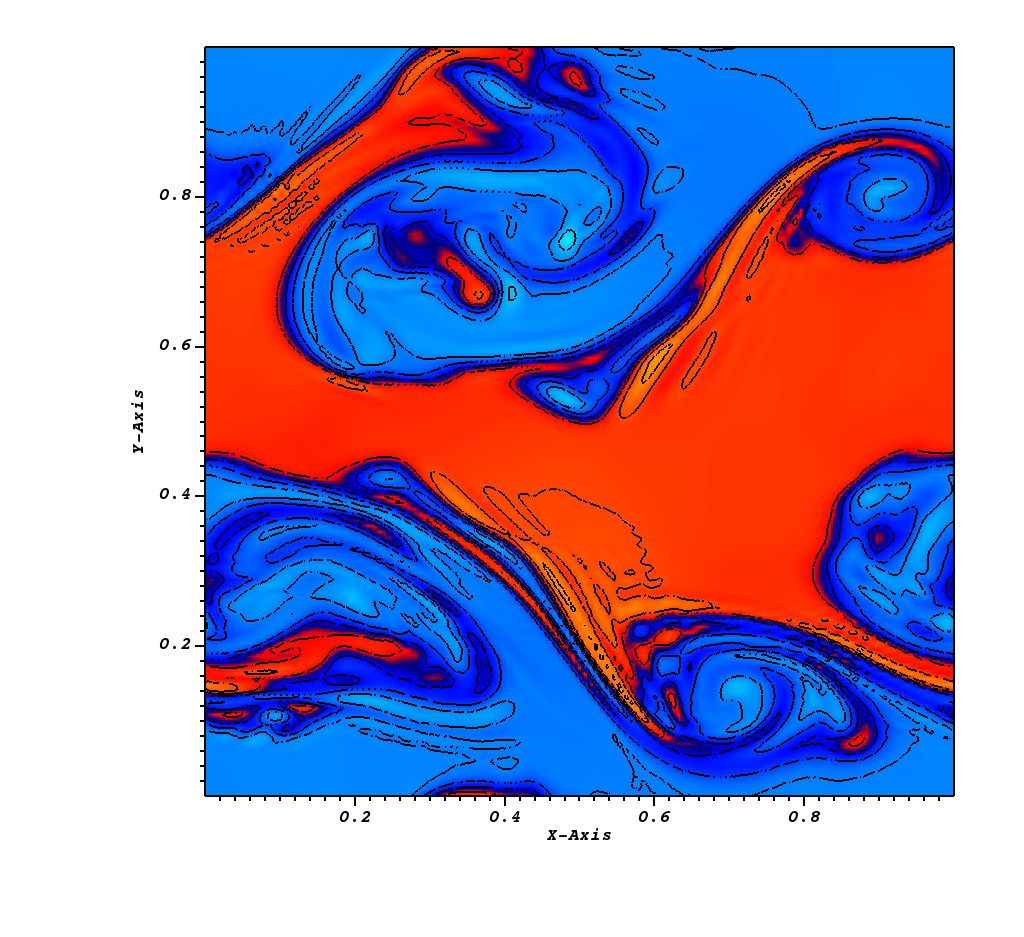} 
    		\caption{Numerical Solution with $256^2$ el.}
    		\label{fig:init_energy2}
  	\end{subfigure}%
	~
  	\begin{subfigure}[b]{0.32\textwidth}
		\includegraphics[width=\textwidth]{%
      		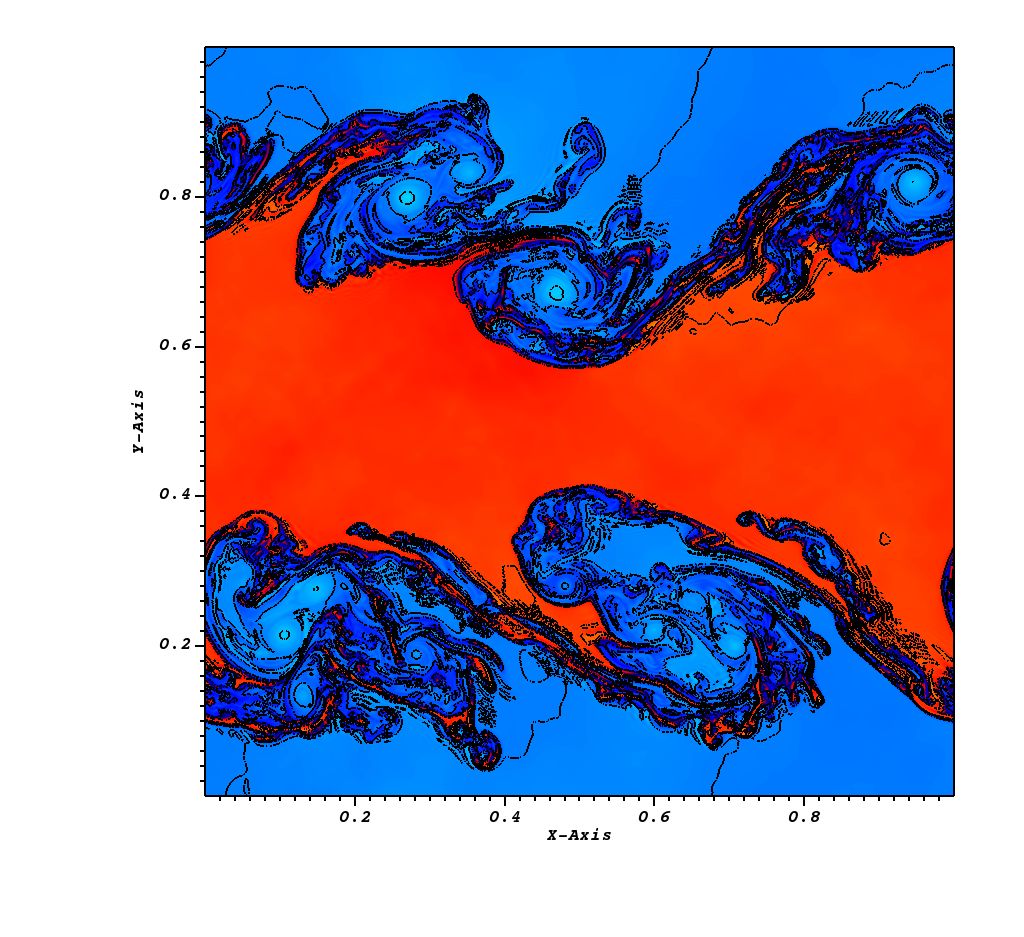} 
    		\caption{Numerical Solution with $2048^2$ el.}
    		\label{fig:init_ener}
	\end{subfigure}%
  	\caption{Kelvin-Helmholtz instability for different mesh sizes with FV shock capturing, $p=1$, $T=2$.}
  	\label{fig:init_approx2}
\end{figure}

\subsubsection*{DG without FV Shock Capturing}
We start our first investigation for $p=1$ elements resulting in a formally second order method for smooth solutions. However, KH  has already jumps in the initial condition \eqref{init_KH}. Therefore, a higher-order approximation can not be observed.  Numerical solutions are computed on subsequently refined meshes with $n\times n$ elements, where $n=32,\cdots, 4096$. 

In Table \ref{tab:table1}-\ref{tab:table3}, the  errors for the density, velocity field in $x$ direction, and the entropy are plotted. Obviously, the DG method \eqref{eq:Shu_modified_two_nodal} does not convergence strongly against the reference solution using $4096^2$ elements but we observe the convergence of the Cesaro averages in the $L^1$ norm as expected. The convergence rate tends to one in all variables. We recognize this also in our conter plots in Figure  \ref{fig:countour_density} and Figure \ref{fig:countour_mean}. In Figure \ref{fig:countour_density}, the numerical solutions using various meshes are plotted. 
We see that oscillations  are  developed and the structures in the numerical solutions using various meshes are different.  This is not the case if we look in Figure \ref{fig:countour_mean}, we plot the mean values where we start adding the numerical solutions of $512^2$ and $1024^2$  
 in Figure \ref{fig:countour_1_density}. In the following, we compute the Cesaro averages by adding
 one additional solution. A clear structure can be recognized and  convergence can be observed.  Similar results can be obtain for the momentum and entropy as well. For rest of this section, we restrict ourself to present only the density profiles. 
 \begin{table}[h!]
  \begin{center}
    \caption{$E_1$-$E_2$ error of the density and the corresponding order, $p=1$}
    \label{tab:table1}
    \begin{tabular}{l|c|c|c|c} 
      n & $E_1$-error & $E_1$-order & $E_2$-error & $E_2$-order\\ 
      \hline
      $32^2$& 0.3391987041589898 &- & 0.19661528297982866  & -\\
      $64^2$& 0.403851909363841& -0.25169570193630514 &  0.17980082465650782 &0.12897582947621833\\
      $128^2$& 0.41762701895495885&-0.04838869174988762 & 0.15735592207367408 & 0.1923681624534606  \\
      $256^2$& 0.3447284223874664 & 0.27675480022495547 &   0.1324612897928833 &  0.2484606642128492      \\ 
      $512^2$&0.24906184298174658&0.46895623852564644  & 0.09346227146130989 & 0.503114805826514 \\ 
      $1024^2$&0.21018735922716783 &0.24482810581029302 &0.05813201041147923 &  0.6850512972297058\\
      $2048^2$&0.1903225981357348 & 0.1432290359501281  &0.032795177858701306 & 0.8258491036824649   \\
    \end{tabular}
  \end{center}
\end{table}

  \begin{table}[h!]
  \begin{center}
    \caption{$E_1$-$E_2$ error of the momentum in $x$-direction and the corresponding order, $p=1$}
    \label{tab:table2}
    \begin{tabular}{l|c|c|c|c} 
      n & $E_1$-error & $E_1$-order & $E_2$-error & $E_2$-order\\ 
      \hline
      $32^2$&    0.22180097746995442 &   -                                 &       0.1014852261083653                         & -\\
      $64^2$& 0.296870662622228& -0.4205688068116397&    0.12277217792560473&     -0.2747139409280387\\
      $128^2$&   0.2812554275541182&  0.07795359309014603&   0.11021471582339173&  0.04130282973069437 \\
      $256^2$& 0.25917305217820485 & 0.11796521703383071 &       0.09463077351790287&    0.21993554165675272   \\ 
      $512^2$&  0.18835216910613375&   0.46048307233010094 &   0.06570345801497315&   0.5263401155398292\\ 
      $1024^2$&  0.1615674379789999&  0.22129617865208162&  0.039190252530856035&    0.7454744332203871\\
      $2048^2$&0.1713764028536377 &  -0.0850320065406901& 0.022004287330142883&          0.8327102224771352  \\
    \end{tabular}
  \end{center}
\end{table}

   \begin{table}[h!]
  \begin{center}
    \caption{$E_1$-$E_2$ error of the entropy and the corresponding order, $p=1$}
    \label{tab:table3}
    \begin{tabular}{l|c|c|c|c} 
      n & $E_1$-error & $E_1$-order & $E_2$-error & $E_2$-order\\ 
      \hline
      $32^2$&    0.8634607581918408 &   -                                 &        0.506355740513842                         & -\\
      $64^2$&  1.042908982694837&  -0.2724107376894449 &     0.4776001097892728&       0.08434813910457194\\
      $128^2$&  1.0385335649452683& 0.006065411504306775 &  0.4069706546364982&  0.230878398168195 \\
      $256^2$&  0.8368436093203517 &    0.3115179042930046 &     0.3336873275691747& 0.28642787169054407   \\ 
      $512^2$&  0.5772298357837078& 0.5358121631981844 &   0.23665851421583725&   0.4956900718266948\\ 
      $1024^2$&  0.49149138854289215&  0.23197973237718744 & 0.14708867505792783&  0.6861206545198556\\
      $2048^2$& 0.43815676117116004 &  0.1657190180182117 & 0.08185488762050568 &         0.8455457031863632  \\
    \end{tabular}
  \end{center}
\end{table}

   \begin{figure}[tb]
	\centering 
	\begin{subfigure}[b]{0.24\textwidth}
		\includegraphics[width=\textwidth]{%
      		 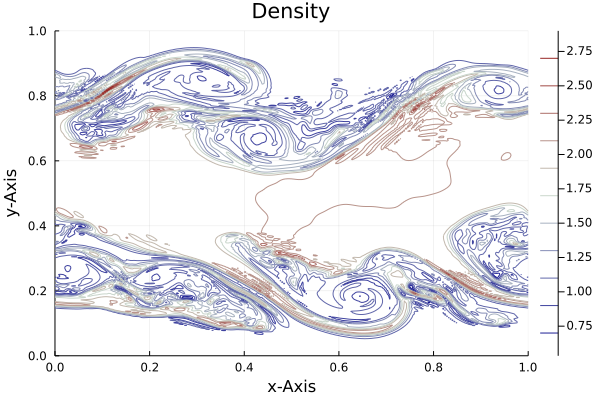} 
    		\caption{ $512^2$ el.}
    		\label{fig:countour_1_density}
  	\end{subfigure}%
	~
  	\begin{subfigure}[b]{0.24\textwidth}
		\includegraphics[width=\textwidth]{%
      		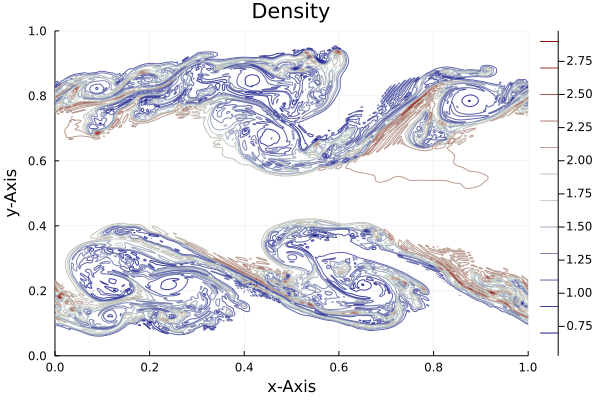} 
    		\caption{ $1024^2$ el.}
    		\label{fig:countour_2_density}
  	\end{subfigure}%
	~
  	\begin{subfigure}[b]{0.24\textwidth}
		\includegraphics[width=\textwidth]{%
      		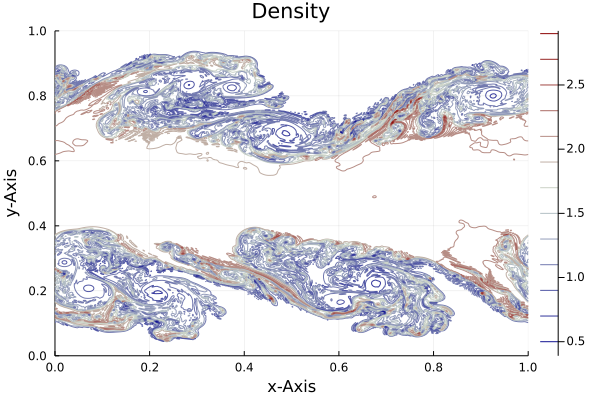} 
    		\caption{$2048^2$ el.}
    		\label{fig:countour_3_density}
	\end{subfigure}%
		~
  	\begin{subfigure}[b]{0.24\textwidth}
		\includegraphics[width=\textwidth]{%
      		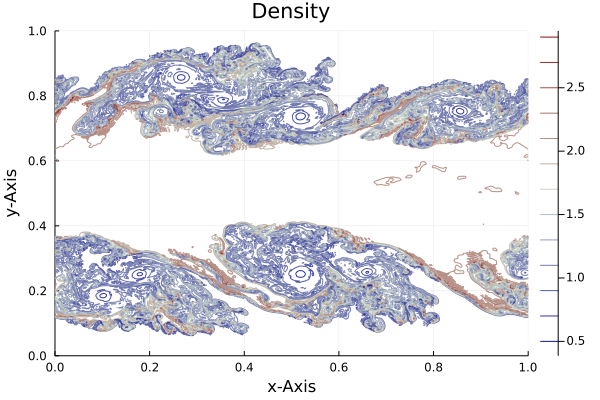} 
    		\caption{ $4096^2$ el.}
    		\label{fig:countour_4_density}
	\end{subfigure}%
  	\caption{Conter plot for the Kelvin-Helmholtz instability for different mesh sizes, $p=1$, $T=2$}
  	\label{fig:countour_density}
\end{figure}

   \begin{figure}[tb]
	\centering 
	\begin{subfigure}[b]{0.32\textwidth}
		\includegraphics[width=\textwidth]{%
      		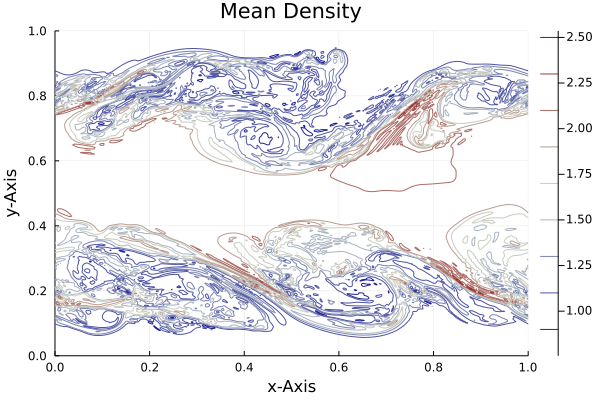} 
    		\caption{ Mean over two meshes}
    		\label{fig:countour_mean_1}
  	\end{subfigure}%
	~
  	\begin{subfigure}[b]{0.32\textwidth}
		\includegraphics[width=\textwidth]{%
      		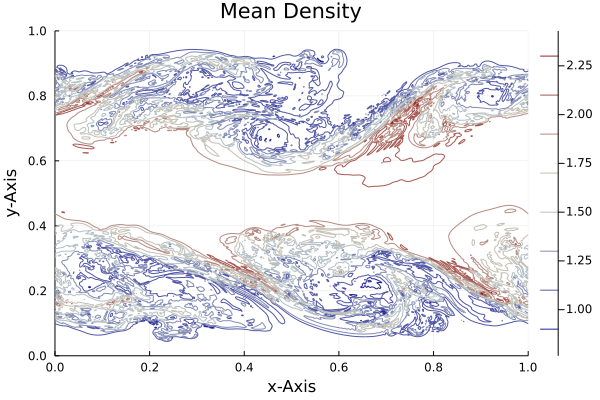} 
    		\caption{Mean over three meshes}
    		\label{fig:countour_mean_2}
  	\end{subfigure}%
	~
  	\begin{subfigure}[b]{0.32\textwidth}
		\includegraphics[width=\textwidth]{%
      		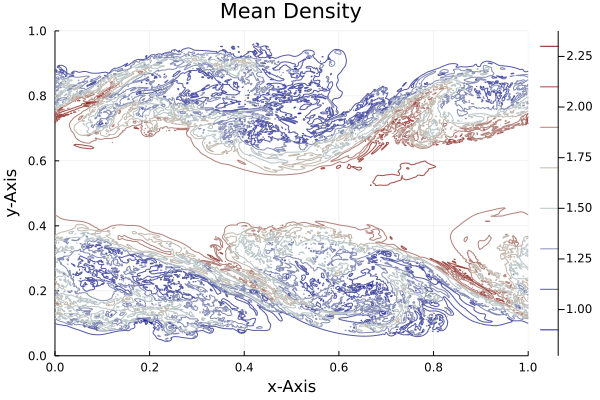} 
    		\caption{Mean over four meshes}
    		\label{fig:countour_mean_3}
	\end{subfigure}%
 	\caption{Conter plot of the means for the Kelvin-Helmholtz instability using various mesh sizes, $p=1$, $T=2$.}
  	\label{fig:countour_mean}
\end{figure}

In what follows, we select $p=2$. The reference solution is now obtained by using $2048 \times 2048$ elements. 
 In Figure \ref{tab:table4}, we present the error Cesaro averages of the density and the corresponding order  using Ranocha's flux and Chandrashekar's flux. 
 We observe that  nearly no difference between the convergence rates and the error can be recognized using Ranocha's flux and Chandrashekar's flux. However, a slide increase of the order can be recognized using $p=2$. 
Convergence rates for the momentum and entropy using the Cesaro averages are as well similar and are not presented here.
   \begin{table}[h!]
  \begin{center}
    \caption{$E_2$ error of the density and corresponding order using Ranocha's fluxes and Chandrashekar's fluxes, $p=2$ }
    \label{tab:table4}
    \begin{tabular}{l|c|c|c|c} 
      n & $E_2$-error & $E_2$-order & $E_2$-error with Ch.& $E_2$-order\ with Ch. \\
      \hline
      $32^2$ &     0.32360971460481636 &   -                                 &         0.32639198490486065                       & -\\
      $64^2$&   0.21635980762837606&    0.580822397551409 &    0.21777959203594555&       0.583736863474447\\
      $128^2$&   0.1693263231729843&  0.3536262503565837 &   0.16874256831080794&  0.36804480151192376 \\
      $256^2$&  0.11267169275425273 &    0.5876811661772481&      0.11359538566359653&  0.5709197324007521   \\ 
      $512^2$&  0.06561832191533114& 0.7799544985300991 &   0.06871005802380212&    0.7253110261242357\\ 
      $1024^2$&   0.03391358950826783&   0.9522352084021665 &  0.03551133314944197&    0.9522417790159925 \\
     \end{tabular}
  \end{center}
\end{table}

In Figure \ref{fig:error_withoutSC}, we plot the $E_2$ error behavior of the density using $p=2,3$ and different fluxes. We observe convergence of the Cesaro averages with a first order convergence rate.

In all of our simulations, we observe the convergence of the Cesaro averages but no strong mesh convergence of individual numerical solutions. 
Our results support  and verify the theoretical convergence results of Theorem \ref{th_convergence}.

\subsubsection*{ DG with FV Shock Capturing}
We illustrate the convergence of the DG method with subcell-limiting, cf. Remark \ref{re_subcell}. 
We start similar to above using $p=1$ elements going from $32\times 32$ to $2048\times 2048$ mesh cells with a reference solution on a mesh with $4096 \times 4096$ cells.
The setting of the shock sensors is $\alpha_{max}=0.002$ and $\alpha_{min}=0.0001$ in all calculations with the positivity preserving of density and pressure, cf. \cite{hennemann2021provably}.
In Table \ref{tab:table1_shock} we have plotted analogously to Table \ref{tab:table1} $E_1$ and $E_2$ errors and their rates.  

\begin{table}[h!]
  \begin{center}
    \caption{$E_1$-$E_2$ error of the density and the corresponding order with subcell FV limiting, $p=1$}
    \label{tab:table1_shock}
    \begin{tabular}{l|c|c|c|c} 
      n & $E_1$-error & $E_1$-order & $E_2$-error & $E_2$-order\\ 
      \hline
      $32^2$&  0.32620071294432 &- &  0.18620594884277916  & -\\
      $64^2$& 0.3826884539011542& -0.23041043888249668 &   0.1741552820199304 & 0.09652493473206789\\
      $128^2$& 0.39180560297402145&  -0.03396765476857858 & 0.15472760286112822& 0.17064363814240419  \\
      $256^2$& 0.32596094081182825& 0.2654389290439049 &    0.13150950951146823&  0.23456346626363495     \\ 
      $512^2$&0.24223750962649404&  0.42827682123120436 & 0.09411168346462777&  0.4827213831175634 \\ 
      $1024^2$&0.21100520733036873 & 0.19914367540003944 & 0.05669312945250009 & 0.7311999290713798\\
      $2048^2$& 0.19099215512416598 & 0.14376522144561604  & 0.03161301861015613 & 0.8426551084815536  \\
    \end{tabular}
  \end{center}
\end{table}
In Table \ref{tab:table2_shock} we present the errors of momentum in $x$-direction and entropy for the Cesaro averages. 
\begin{table}[h!]
  \begin{center}
    \caption{$E_2$ error of the momentum in $x$-direction and entropy,  and the corresponding order with subcell FV limiting, $p=1$}
    \label{tab:table2_shock}
    \begin{tabular}{l|c|c|c|c} 
      n & $E_2$-error mom. & $E_2$-order mom. & $E_2$-error entr. & $E_2$-order ent. \\ 
      \hline
      $32^2$&   0.09636282530552123 &- &  0.48202985147056043  & -\\
      $64^2$&  0.12086791867898412&    -0.3268827702770804&   0.45629006958259616 & 0.07917123722117338\\
      $128^2$&  0.11068505134027033&  0.12697097769624366 &  0.39354049881122233&  0.21343914669397857 \\
      $256^2$&  0.09727721901645832& 0.1862865005367386 &    0.3269830393966516&   0.267296304397868    \\ 
      $512^2$& 0.06759976270179612&  0.5250838031078213 & 0.2313402460238223&    0.4991995331272218 \\ 
      $1024^2$& 0.03906380943126804& 0.7911855370976559 &  0.14196346806488086&   0.7044965480368152\\
      $2048^2$& 0.022455417756270395 &    0.7987690830794113  &  0.07804379135369038 &  0.8631639533047681 \\
    \end{tabular}
  \end{center}
\end{table}
We see that the results are comparable to the results without subcell limiting. The subcell FV technique does not change the convergence result  to  dissipative weak solutions. Using $p>1$ and Chandrasheka's flux, we obtain analogous results to the first investigation, e.g. in Figure \ref{fig:error_withSC} we give the $E_2$ error plot for the density using $p=2,3$. 
\begin{figure}[tb]
	\centering 
	\begin{subfigure}[b]{0.4\textwidth}
		\includegraphics[width=\textwidth]{%
      		Errorplot} 
    		\caption{Without Subcell FV}
    		\label{fig:error_withoutSC}
  	\end{subfigure}%
	~
  	\begin{subfigure}[b]{0.4\textwidth}
		\includegraphics[width=\textwidth]{%
      		 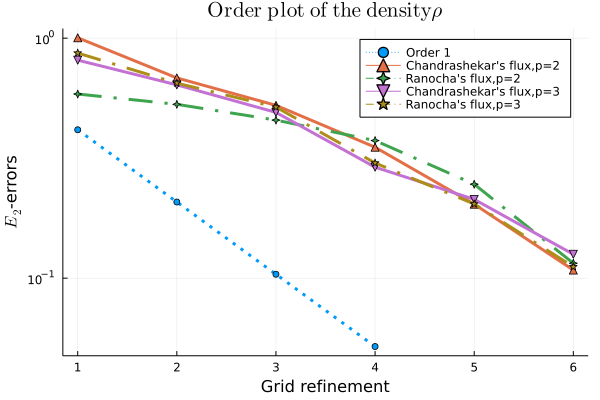} 
    		\caption{With Subcell FV}
    		\label{fig:error_withSC}
  	\end{subfigure}%
  	\caption{Error plots of the density using different Ranocha's flux and Chandrasheka's flux, $p=2,3$, $T=2$, dotted line is the reference first order slope.}
  	\label{fig:error_plots}
\end{figure}

\section{Conclusion}\label{se_con}

We have presented the first convergence analysis of the Euler equation via dissipative weak solutions for a high-order discontinuous Galerkin scheme. 
We have focused on entropy dissipative DG schemes using SBP operators.
A key point is that 
 scheme \eqref{eq:Shu_modified_two_nodal} is the  structure preserving properties and consistent with the underlying PDE, cf. Section \ref{se:Consistency}. We proved the  consistency by demanding  additional assumption \eqref{eq_assumption_2}.  In future, we plan to relax this condition and investigate the approximation properties of high-order schemes in the presence of shocks in more detail. However, other techniques to ensure consistency and  convergence can also be applied. Here, we used  subcell FV limiting to demonstrate that those techniques can also be applied in the context of DW solutions. The major key is that the limiting strategies are consistent in the presence of discontinuities.  Our investigation will be extended in the future taking into account other limiting strategies and further high-order schemes. Further,  the concept of dissipative weak solutions is not only restricted to the Euler equation but can also be used for magneto-hydrodynamics (MHD) and Navier-Stokes, cf. \cite{feireisl2021numerics, li2021convergence}.

\section{Appendix}\label{sec_appendix}

\subsection{Notation}\label{sub:notation}
In the following, we introduce some notations which are used in the main part of this work. 
In our notation, we follow \cite{feireisl2021numerics}
and denote by $\mathcal{M}^+(\overline{\Omega})$ the set of all nonnegative Borel measures on a topological space $\overline{\Omega}$. 
The symbol $\mathcal{M}(\overline{\Omega})$ denotes the set of all signed \textbf{Radon measures} that can be identified at the space of all linear forms on $C_c(\overline{\Omega})$, especially if $\overline{\Omega}$ is compact,
i.e. $[C_c(\overline{\Omega})]^*=\mathcal{M}(\overline{\Omega})$.
The symbol $\mathcal{M}^+(\overline{\Omega}; \R^{d\times d}_{sym})$ denotes the set of  \emph{positive semi-definite matrix valued measures}, i.e.
\begin{equation*}
\mathcal{M}^+(\overline{\Omega}, \R^{d\times d}_{sym} )= \left\{ \nu \in \mathcal{M}^+(\overline{\Omega}, \R^{d\times d}_{sym}) \big|
\int_{\overline{\Omega}} \phi(\xi \otimes\xi): \diff \nu\geq0  \text{ for any } \xi \in \R^d, \phi \in C_c(\overline{\Omega}), \phi\geq 0 
 \right\}.
\end{equation*}

A measure $\nu \in \mathcal{M}^+(\overline{\Omega})$ is called \emph{probability measure} if $\nu(\overline{\Omega}) =1$.
The (convex) set of all Borel probability measures on $\overline{\Omega}$ is denoted by $\mathcal{P}(\overline{\Omega})$. 
The symbol $C_{weak}(Q; X)$ denotes the space of functions on $Q$ ranging in a Banach space $X$, continuous with respect to the weak topology. More specific, $g \in C_{weak}(Q;X)$ if the mapping $y \to \norm{g(y)}_{X} $ is bounded and $y\to \est{f; g(y)}_{X^*,X}$ is continuous on Q for any linear from $f$ belonging to the dual space $X^*$.  
Next, we define defect measures. 
\begin{definition}[Defect Measure]\label{defect_measure}
Let $\Omega \subset \R^d$ be a bounded domain. Let $\{U_i\}_i^{\infty},( \norm{U_i}_{L^1(\Omega;\R^k} \leq c$),
be a sequence of functions generating a Young measure $\{ \nu_y \}_{y\in \Omega}$. Let $b\in C(\R^k)$,
$
|(b(V)| \leq c( 1+|V|), 
$
satisfies $b(U_i)\to \overline{b(U)}$ weakly-$(*)$ in $\mathcal{M}(\overline{\Omega})$. We call 
$
\overline{b(U)} - \left\{ y\to \est{\nu_y; b(\tilde{U})} \right\} \in \mathcal{M}(\overline{\Omega})
$
the \textbf{concentration defect} and the $\Omega$-measurable function 
$
\left\{ y\to \est{\nu_y; b(\tilde{U})} \right\}-b(U)
$
the \textbf{oscillation defect}.

\end{definition}
\begin{remark}
We recall the fundamental theorem of the theory of Young measures (cf. \cite{ball1989version}), that states
\begin{equation*}
\est{\nu_{t,x}, g(U)} = \overline{g(U)}(t,x) \text{ for a.a. } (t,x) \in (0,T)\times \Omega, 
\end{equation*}
whenever $g \in C_c(\mathcal{F})$ and 
\begin{equation*}
 g(U^h) \wwto \overline{g(U)}  \text{ in }  L^1((0,T)\times \Omega). 
\end{equation*}
\end{remark}

\subsection{Connection with Summation-by-parts Operators}\label{eq:SBP property}
 Here, we describe the SBP operators and repeat some well-known properties following 
   \cite{chenreview, offner2020stability}. Since we have used Gauss-Lobatto points for each direction in tensor product form, we restrict ourselves to the one dimensional setting to demonstrate the SBP property.

\begin{theorem}[Summation-by-parts Property]\label{Th:SBP_Lobatto}
The one-dimensional operators\footnote{We omit the index $1$ in the following proofs.} from Section \ref{se:flux_differencing} fulfill the SBP property 
\begin{equation}\label{eq:SBP_property_DG}
\mat{Q}+\mat{Q}^T=\mat{M}\mat{D}  + (\mat{MD})^T  = \mat{B}=\diag(-1,0,\cdots,0,1).
\end{equation}

\end{theorem}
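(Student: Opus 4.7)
The plan is to establish the two equalities $\mat{Q} = \mat{M}\mat{D}$ and $\mat{Q}+\mat{Q}^T = \mat{B}$ separately, exploiting the exactness of Gauss--Lobatto quadrature on polynomials of degree up to $2p-1$ together with the nodal interpolation property $L_j(\xi_k)=\delta_{jk}$ and the endpoint values $\xi_0=-1$, $\xi_p=1$.

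First I would unpack the entries of the three matrices using the discrete inner product. Since $\mat{M}$ is diagonal with $M_{jj}=\omega_j$, one gets $(\mat{M}\mat{D})_{jl} = \omega_j L_l'(\xi_j)$, while $\mat{Q}_{jl} = \langle L_j, L_l'\rangle_\omega = \sum_k \omega_k L_j(\xi_k) L_l'(\xi_k) = \omega_j L_l'(\xi_j)$ by collocation. This proves $\mat{Q} = \mat{M}\mat{D}$ directly, with no need for quadrature exactness (the identity uses only $L_j(\xi_k)=\delta_{jk}$). In the same spirit, $\mat{Q}^T$ has entries $\mat{Q}^T_{jl} = \mat{Q}_{lj} = \omega_l L_j'(\xi_l) = (\mat{M}\mat{D})^T_{jl}$, so the second equality in the displayed chain is just the transpose of the first.

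Next I would prove $\mat{Q}+\mat{Q}^T = \mat{B}$. The key observation is that for $p+1$ Gauss--Lobatto nodes the quadrature rule is exact on polynomials of degree $\leq 2p-1$. Because $L_j, L_l \in \mathcal{Q}^p([-1,1])$, the products $L_j L_l'$ and $L_j' L_l$ both lie in this exactness range, so
\begin{equation*}
\mat{Q}_{jl} + \mat{Q}_{lj} \;=\; \langle L_j, L_l'\rangle_\omega + \langle L_j', L_l\rangle_\omega \;=\; \int_{-1}^{1}\!\bigl(L_j L_l' + L_j' L_l\bigr)\,\dd x \;=\; \bigl[L_j L_l\bigr]_{-1}^{1}.
\end{equation*}
Evaluating the boundary term via $L_j(-1)=L_j(\xi_0)=\delta_{j0}$ and $L_j(1)=L_j(\xi_p)=\delta_{jp}$ yields $\mat{Q}_{jl}+\mat{Q}_{lj} = \delta_{jp}\delta_{lp} - \delta_{j0}\delta_{l0}$, which is exactly the $(j,l)$-entry of $\mat{B}=\diag(-1,0,\ldots,0,1)$.

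The only delicate ingredient is invoking the Gauss--Lobatto exactness on degree $2p-1$ polynomials; I do not expect a genuine obstacle, but I would cite (or briefly justify) this standard quadrature result to be self-contained, and I would flag explicitly that the placement of the two extremal quadrature nodes at $\pm 1$ is what ties the boundary contribution to $\mat{B}$. Everything else reduces to the collocation identity $L_j(\xi_k)=\delta_{jk}$, so the argument is essentially a three-line computation once these facts are in place.
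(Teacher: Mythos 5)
Your proof is correct and follows essentially the same route as the paper: both establish $\mat{Q}=\mat{M}\mat{D}$ entrywise via the collocation identity $L_j(\xi_k)=\delta_{jk}$ and then obtain $\mat{Q}+\mat{Q}^T=\mat{B}$ from the boundary evaluation $\bigl[L_jL_l\bigr]_{-1}^{1}=\delta_{jp}\delta_{lp}-\delta_{j0}\delta_{l0}$. The only difference is that you explicitly justify the intermediate step via Gauss--Lobatto exactness on degree $2p-1$ polynomials, which the paper leaves implicit.
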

\begin{proof}
It is  $Q_{jl}=M_{jj}D_{jl}=\omega_j L_{l}'(\xi_j) =
 \sum_{j=0}^p \omega_r L_j (\xi_r) L_{l}'(\xi_r)
 =\est{L_j,L_l'}_{\omega}$
and so 
\begin{equation*}
M_{jj}D_{jl}+D_{lj}M_{jj}=\est{L_j,L_l'}_{\mat{M}}+\est{L_l,L_j'}_{\mat{M}}= L_j(1)L_l(1)-L_j(-1)L_l(-1)=
\delta_{pj}\delta_{pl}-\delta_{0j}\delta_{0l}.
\end{equation*}
Hence,
$\mat{B}=\mat{M}\mat{D}  + (\mat{MD})^T$.
\end{proof}
Another useful relations are  following.
\begin{proposition}\label{prop:derivative}
For each $0\leq j\leq p$ we have
\begin{equation}
\sum_{l=0}^p D_{jl}=\sum_{l=0}^p Q_{jl} =0, \qquad 
\sum_{l=0}^p Q_{lj}=\tau_j=\begin{cases}
-1 \quad j=0\\
1 \quad j=p\\
0 \quad 1\leq j\leq p-1
\end{cases}.
\end{equation}
\end{proposition}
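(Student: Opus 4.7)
The plan is to establish the three identities by combining three standard facts: the partition-of-unity property of the Lagrange basis, the definitional link $\mat{Q} = \mat{M}\mat{D}$, and the SBP identity $\mat{Q}+\mat{Q}^T=\mat{B}$ already proved in Theorem~\ref{Th:SBP_Lobatto}. None of the three steps should be genuinely hard; the statement is really a bookkeeping corollary of the SBP structure.

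First I would handle $\sum_{l=0}^p D_{jl}=0$. Since $\{L_l\}_{l=0}^p$ is the nodal Lagrange basis at the Gauss--Lobatto nodes $\xi_0,\dots,\xi_p$, it reproduces constants exactly, so $\sum_{l=0}^p L_l(x)\equiv 1$ on $[-1,1]$. Differentiating this identity and evaluating at $x=\xi_j$ yields $\sum_{l=0}^p L_l'(\xi_j)=0$, which by the definition $D_{jl}=L_l'(\xi_j)$ is exactly the claim. Multiplying the resulting row sum by the diagonal entry $M_{jj}=\omega_j>0$ gives $\sum_{l=0}^p Q_{jl}=\omega_j\sum_{l=0}^p D_{jl}=0$, the second identity.

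For the column sum $\sum_{l=0}^p Q_{lj}=\tau_j$ I would run the argument two ways, both as a sanity check. The SBP route is the quickest: summing $Q_{lj}+Q_{jl}=B_{lj}$ over $l$ and using the row-sum identity just proved gives
\begin{equation*}
\sum_{l=0}^p Q_{lj}=\sum_{l=0}^p B_{lj}-\sum_{l=0}^p Q_{jl}=B_{jj}-0=\tau_j,
\end{equation*}
since $\mat{B}=\diag(-1,0,\dots,0,1)$ is diagonal. Alternatively, one can observe that $\sum_{l=0}^p Q_{lj}=\sum_{l=0}^p \omega_l L_j'(\xi_l)=\langle L_j',1\rangle_\omega$, and since $L_j'$ has degree at most $p-1$ the Gauss--Lobatto rule integrates it exactly, so this equals $\int_{-1}^1 L_j'(x)\,dx=L_j(1)-L_j(-1)=\delta_{jp}-\delta_{j0}=\tau_j$.

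There is no real obstacle here: the only subtlety is not to confuse a row sum with a column sum of $\mat{Q}$ (they are generally different because $\mat{Q}$ is not symmetric), and to invoke the exactness of Gauss--Lobatto quadrature only on polynomials of degree $\leq 2p-1$, which is comfortably satisfied by $L_j'\in\mathcal{Q}^{p-1}$. Once those two points are observed, the proposition follows from a three-line computation.
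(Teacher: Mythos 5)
Your proposal is correct and follows essentially the same route as the paper: the partition-of-unity identity $\sum_l L_l\equiv 1$ gives the row sums of $\mat{D}$ and $\mat{Q}$, and the SBP relation $\mat{Q}+\mat{Q}^T=\mat{B}$ then yields the column sum. The extra quadrature-exactness derivation of $\sum_l Q_{lj}=\tau_j$ is a nice independent check but not part of the paper's argument.
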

\begin{proof}
Since the sum of the Lagrange polynomials are $\sum_{l=0}^pL_l(\xi)=1$, we get
\begin{equation*}
\begin{aligned}
\sum_{l=0}^p D_{jl}=\sum_{l=0}^p L'_l(\xi_j)=0, \qquad 
\sum_{l=0}^p Q_{jl}=w_j\sum_{l=0}^pD_{jl}=0,\\
\sum_{l=0}^p Q_{lj}=\sum_{l=0}^p B_{jl}-\sum_{l=0}^p Q_{jl}=
\sum_{l=0}^p B_{jl}= B_{jj}=\tau_j.
\end{aligned}
\end{equation*}
\end{proof}
Due to the tensor structure, the results transform automatically to the two dimensional setting.
 
\subsection{Extension to the Fully Discrete Setting - Techniques in the Implementation}\label{se:extension}

Our DG method \eqref{eq:Shu_modified_two_nodal} needs to guarantee 
that the density and internal energy (consequently pressure and temperature) remain positive at all degrees of freedom on $(0,T)$. 
To obtain this, we apply the well-known limiting strategy  by Zhang and Shu  \cite{zhang2011maximum}. As written in \cite{zhang2011maximum}, the approach can be summarized as follows\footnote{We omit the $h$-dependence in the following.}. To construct high-order DG (or WENO) schemes preserving the positivity of density and pressure, we need essential those four steps:
\begin{enumerate}
 \item Introduce the admissible set 
 \begin{equation}
  G= \left\{  \bU=\begin{pmatrix}
    \rho \\
        \bm\\
        E
      \end{pmatrix} \big| \rho>0  \text{ and } p=(\rho-1) \left( E- \frac{1}{2} \frac{|\bm|^2}{\rho}\right)  >0  \right\}
 \end{equation}
 and prove that $G$ is a convex set. 
 \item Focus on the chosen first order scheme for the Euler equation and prove that it preserves the positivity of $\rho$ and $p$.
  \item Find a sufficient condition for the Euler forward time discretization that the cell averages $\mean{u}$ of the DG method (and so for the underlying first order scheme) 
 remain in $G$. Then, high order strong stability preserving Runge-Kutta (SSPRK) method will keep the positivity due to the convexity of $G$.
 \item Construct and apply a limiter to enforce the positivity at the nodal values.
\end{enumerate}
It is shown in \cite{zhang2012minimum} that $G$ is  a convex set. 
The first order method in our case reduces to a simple cell centered FV method.
The properties of the surface fluxes are essential for the  basic properties of the scheme. It is well-known that both the (local) Lax-Friedrich as well as the Godunov 
fluxes are  positivity preserving fluxes. So both preserve invariant domains under suitable time step restriction. Again, here  assumption \ref{assumtion_1}
is needed to ensure those properties. \\
As it is described \textit{inter alia} in \cite{zhang2011maximum}, for rectangular meshes the CFL condition is given by
 $\lambda_x a_x +\lambda_y a_y \leq \alpha_0$ where the term $\lambda_m$ is the ratio of time and space mesh size  $\lambda_m= \frac{\Delta t}{\Delta x/\Delta y}$
and $a_m$ the maximum speeds\footnote{$u_m+c$ comes from the eigenvalues of the Jacobi matrices of the Euler fluxes.} $a_m= \norm{|u_m|+c}_{\infty}$.
Using SSPRK method will transfer the result to higher order in time. The cell averages of our DG method remain in the set $G$. Finally, the limiters are applied
to guarantee not only the positivity for the cell averages but also for the nodal values. The limiter itself is a simple linear scaling procedure 
$\tilde{\bU}_j^{i,n} =\mean{\bU}_j^{i,n} +\theta_j^{i,n} ( \bU_j^{i,n}-\mean{\bU}_j^{i,n} )$ to enforce $\tilde{\bU}_j^{i,n} \in G$ which is always possible as long as $\mean{\bU}_j^{i,n} \in G$. Roughly speaking, for each $j$, we compute 
$\theta_j^{i,n}=\max\{s \in [0,1]| \mean{\bU}_j^{i,n} +\theta_j^{i,n} ( \bU_j^{i,n}-\mean{\bU}_j^{i,n}) \in G ) \}$. Then, we simply let $\theta^{i,n}=\min_j \theta_j^{i,n}$. 
The limiter tends to squeeze the data towards the cell average. 
 The limiting process returns us nodal values satisfying the conservation property but 
 enforces the pressure and density values to be positive. It has to be used at each time step. Furthermore, due to assumption  \ref{assumtion_1} on the cell average, we obtain analogous results about the boundedness of the nodal values by the application of the limiters. Therefore, not only the averages of the variables are bounded but also their nodal values.  \\
It is important that the bound-preserving limiter is compatible with the entropy inequality and it actually does not increase the entropy, cf. \cite{chen2017entropy}. 

\begin{theorem}\label{the_limiters}
Suppose $\alpha_j$, $\bU_j \in \Omega$ for $ 1\leq j\leq n_p$ with $\sum_{j=1}^{n_p} \alpha_j =1$. Define the average $\mean{\bU}= \sum_{j=1}^{n_p} \alpha_j \bU_j$.
We modify these values without changing the average. That is, let $ \tilde{\bU}_j=\mean{\bU} + \theta_j (\bU_j-\mean{\bU})$ such that $0\leq \theta_j \leq 1$ and
$\mean{\bU}= \sum_{j=1}^{n_p} \alpha_j \tilde{\bU}_j$. Then for any convex entropy function $\eta$, we have 
$\sum_{j=1}^{n_p} \alpha_j \eta(\tilde{\bU}_j) \leq \sum_{j=1}^{n_p} \alpha_j \eta(\bU_j)$. The limiter does not increase the entropy. 
\end{theorem}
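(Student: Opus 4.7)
The plan is to use the convexity of $\eta$ twice: once pointwise on each $\tilde{\bU}_j$, and once, in the form of Jensen's inequality, on the cell average $\mean{\bU}$ after exploiting the conservation constraint $\sum_j \alpha_j \tilde{\bU}_j = \mean{\bU}$.

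First I would observe that by the very definition $\tilde{\bU}_j = (1-\theta_j)\mean{\bU} + \theta_j \bU_j$, so $\tilde{\bU}_j$ is a convex combination of the two points $\mean{\bU}$ and $\bU_j$. Convexity of $\eta$ then gives the pointwise bound
\begin{equation*}
\eta(\tilde{\bU}_j) \leq (1-\theta_j)\eta(\mean{\bU}) + \theta_j \eta(\bU_j), \qquad j=1,\dots,n_p.
\end{equation*}
Multiplying by $\alpha_j \geq 0$ and summing, I obtain
\begin{equation*}
\sum_{j=1}^{n_p} \alpha_j \eta(\tilde{\bU}_j) \;\leq\; A\,\eta(\mean{\bU}) + \sum_{j=1}^{n_p} \alpha_j \theta_j \eta(\bU_j),
\end{equation*}
with $A := \sum_{j} \alpha_j (1-\theta_j) \in [0,1]$. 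Comparing with the target right-hand side $\sum_j \alpha_j \eta(\bU_j) = \sum_j \alpha_j(1-\theta_j)\eta(\bU_j) + \sum_j \alpha_j \theta_j \eta(\bU_j)$, the theorem reduces to the single inequality
\begin{equation*}
A\,\eta(\mean{\bU}) \;\leq\; \sum_{j=1}^{n_p} \alpha_j (1-\theta_j) \eta(\bU_j).
\end{equation*}

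For $A=0$ this is trivial, so suppose $A>0$. Next I would invoke the conservation constraint: since $\mean{\bU} = \sum_j \alpha_j \tilde{\bU}_j = A\,\mean{\bU} + \sum_j \alpha_j \theta_j \bU_j$, it follows that $\sum_j \alpha_j \theta_j \bU_j = (1-A)\mean{\bU}$ and hence $\sum_j \alpha_j(1-\theta_j)\bU_j = A\,\mean{\bU}$. Introducing the normalized weights $\beta_j := \alpha_j(1-\theta_j)/A \geq 0$ with $\sum_j \beta_j = 1$, this identity rewrites as $\sum_j \beta_j \bU_j = \mean{\bU}$. Jensen's inequality applied to the convex function $\eta$ and the probability weights $\beta_j$ then yields
\begin{equation*}
\eta(\mean{\bU}) = \eta\!\left(\sum_j \beta_j \bU_j\right) \leq \sum_j \beta_j \eta(\bU_j) = \frac{1}{A}\sum_j \alpha_j(1-\theta_j)\eta(\bU_j),
\end{equation*}
which after multiplication by $A$ is exactly the reduced inequality above, completing the proof.

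There is essentially no obstacle beyond spotting the two-step application of convexity and carrying out the algebraic bookkeeping; the mild technical point is the degenerate case $A=0$ (equivalent to all $\theta_j = 1$, i.e. no limiting occurs and $\tilde{\bU}_j = \bU_j$), which must be handled separately before dividing by $A$. No properties of $\eta$ beyond convexity are needed, so the argument covers the mathematical entropy of the Euler system as well as any other convex entropy functional.
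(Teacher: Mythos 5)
Your proof is correct and follows essentially the same route as the paper's: pointwise convexity on each $\tilde{\bU}_j$ as a convex combination of $\mean{\bU}$ and $\bU_j$, followed by the identity $\sum_j \alpha_j(1-\theta_j)\bU_j = \bigl(\sum_j \alpha_j(1-\theta_j)\bigr)\mean{\bU}$ and Jensen's inequality to absorb the $\eta(\mean{\bU})$ term. The only difference is that you explicitly normalize the weights and treat the degenerate case $A=0$, a point the paper glosses over.
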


\begin{proof}
Since $\sum_{j=1}^{n_p} \alpha_j \tilde{\bU}_j= \sum_{j=1}^{n_p} \alpha_j\left( \mean{\bU} + \theta_j (\bU_j-\mean{\bU}) \right) =\mean{\bU}$, we have
$
\sum_{j=1}^{n_p} \alpha_j(1-\theta_j) \bU_j = \left(\sum_{j=1}^{n_p} \alpha_j(1-\theta_j) \right) \mean{\bU}.
$
Due to the convexity of $\eta$, we get 
$
\eta(\tilde{\bU}_j) \leq \theta_j \eta(\bU_j) +(1-\theta_j)  \eta(\mean{\bU})$ and so $$
 \left(\sum_{j=1}^{n_p} \alpha_j(1-\theta_j) \right) \eta(\mean{\bU}) \leq \sum_{j=1}^{n_p} \alpha_j(1-\theta_j) \eta(\bU_j).
$$Hence, 
\begin{align*}
\sum_{j=1}^{n_p} \alpha_j\eta(\tilde{\bU}_j) &\leq \sum_{j=1}^{n_p} \alpha_j\left(  \theta_j \eta(\bU_j) +(1-\theta_j)  \eta(\mean{\bU})\right)
= \sum_{j=1}^{n_p} \alpha_j  \theta_j \eta(\bU_j)+ \left( \sum_{j=1}^{n_p} \alpha_j (1-\theta_j)\right) \eta(\mean{\bU})\\
&\leq \sum_{j=1}^{n_p} \alpha_j  \theta_j \eta(\bU_j)+ \sum_{j=1}^{n_p} \alpha_j (1-\theta_j)\eta(\bU_j) =\sum_{j=1}^{n_p} \alpha_j \eta(\bU_j). 
\end{align*}
Therefore, the limiter does not increase the entropy. 
\end{proof}
As described in  \cite{zhang2011maximum}, the limiter is based on the  Gauss-Lobatto nodes and does not violate our entropy condition \eqref{iq:entropy}. 
Finally, we stress out that a minimum entropy principle will also be satisfied by extending $G$ using the specific convex entropy. Therefore, in the calculation of 
$\theta$ this has to be taken into account. For a detailed explanation as well as the implementation  details we refer again to \cite{zhang2011maximum}. \\
For our consideration, it is enough to focus on simple underlying FV scheme and so the cell averages since by applying those limiters we can extend the results to DGSEM schemes.

 \begin{con}\label{con_DG_Space}
If we apply DG method \eqref{eq:Shu_modified_two_nodal}  with Ranocha's flux \eqref{eq:Ranocha_flux} for the volume part  and 
some basic numerical flux like (local) Lax-Friedrich flux (or Godunov) for the surface integral, we derive  a high order scheme that is conservative and entropy stable in the semidiscrete setting. By applying the bounded preserving limiter from Zhang and Shu at each  time step $t^n$, we can further 
guarantee that the  scheme is positivity preserving.
Further, assumption \ref{assumtion_1} is ensured also for nodal solution values due to the construction of the limiter.
\end{con} 

To ensure that the discrete scheme is  entropy stable, various techniques can be applied.\\
In the FV framework in  \cite{feireisl2019convergence}, implicit schemes are only considered for the time-integration. We can apply implicit SSPRK schemes which should ensure the physical bounds and  high order accuracy. 
 Another way is to use the relaxation approach \cite{ranocha2020relaxation}.  Here, the main idea is to apply a classical time-integration scheme (in our case explicit SSPRK schemes) and adapt the last subtimestep in such a way that we obtain entropy dissipation. 
 
 \begin{con}
Combining our limited DG method with the relaxation approach \cite{ranocha2020relaxation}, we obtain a positivity preserving, entropy dissipative, conservative and arbitrary high order DG scheme. 
\end{con}

\section*{Acknowledgements} 
M.L.-M. has been founded by the German Science Foundation (DFG) under the collaborative research projects TRR SFB 165 (Project A2) and TRR SFB 146 (Project C5).\\
M.L.-M. and P.\"O. gratefully acknowledge support of the Gutenberg Research College,  JGU Mainz.

\small
\bibliographystyle{abbrv}
\bibliography{literature}

\end{document}